\documentclass[10pt, a4paper]{amsart}

\usepackage[T1]{fontenc}
\usepackage[utf8]{inputenc}
\usepackage{amsmath, amssymb, amsfonts, amsthm, amsbsy}
\usepackage[margin=2.0cm]{geometry}
\usepackage[]{setspace}
\usepackage[shortlabels]{enumitem}
\usepackage[usenames,dvipsnames]{xcolor}
\usepackage[,unicode]{hyperref}
\hypersetup{
	urlcolor=blue,
	colorlinks=true,
	linkcolor= blue,
	citecolor=blue
}
\usepackage[all]{xy}
\usepackage{libertine,libertinust1math}
\usepackage{tikz}
\usetikzlibrary{fit,shapes,calc,arrows,through,intersections,arrows.meta,decorations.pathmorphing}
\usepackage{tkz-euclide}
\theoremstyle{plain}
\newtheorem{theorem}{Theorem}[section]
\newtheorem*{theorem*}{Theorem}
\newtheorem*{corollary*}{Corollary}
\newtheorem{introtheorem}{Theorem}

\newtheorem{proposition}[theorem]{Proposition}
\newtheorem{lemma}[theorem]{Lemma}
\newtheorem{corollary}[theorem]{Corollary}

\newtheorem{conjecture}[theorem]{Conjecture}
\theoremstyle{definition}
\newtheorem{example}[theorem]{Example}
\newtheorem{definition}[theorem]{Definition}
\newtheorem{observation}[theorem]{Observation}
\newtheorem{question}[theorem]{Question}

\theoremstyle{remark}
\newtheorem{remark}[theorem]{Remark}

\newcommand\PP{\mathrm{P}}

\newcommand\ac{\mathrm{ac}}
\newcommand\res{\mathrm{res}}
\newcommand\ch{\mathrm{char}}
\newcommand\Lang{\mathfrak{L}}
\newcommand\Lring{\Lang_{\mathrm{ring}}}

\newcommand\Lfield{\Lang_{\mathrm{field}}}
\newcommand\Lval{\mathfrak{L}_{\mathrm{val}}}
\newcommand\Lvalpi{\Lval(\varpi)}
\newcommand\Loag{\Lang_{\mathrm{oag}}}

\newcommand\Lac{\Lang_{\ac}}

\newcommand\Lk{\Lang_{\bbk}}
\newcommand\LG{\Lang_{\bbG}}

\newcommand\Llambda{\Lang_{\lambda}}
\newcommand\Llambdaring{\Lang_{\lambda,\mathrm{ring}}}
\newcommand\Llambdaval{\Lang_{\lambda,\mathrm{val}}}
\newcommand\ik{\iota_{\bbk}}
\newcommand\iG{\iota_{\bbG}}
\newcommand\psvar[2]{#1(\!({#2})\!)}
\newcommand\Hs[2]{\psvar{#1}{t^{#2}}}
\newcommand\ps[1]{\Hs{#1}{}}
\newcommand\ipsvar[2]{#1[\![{#2}]\!]}
\newcommand\iHs[2]{\ipsvar{#1}{t^{#2}}}
\newcommand\ips[1]{\iHs{#1}{}}
\renewcommand\mid{:}
\newcommand\TVF{\mathsf{TVF}}
\newcommand\STVF{\mathsf{S}\TVF}

\renewcommand\Form{\mathrm{Form}}

\renewcommand\H{\mathrm{H}}
\newcommand\Hen{\H^{\mathrm{e}\prime}} % Henselian equicharacteristic nontrivial - class
\newcommand\Th{\mathrm{Th}}
\newcommand\tp{\mathrm{tp}}

\newcommand\reg{\mathrm{reg}}
\newcommand\impdeg{\mathfrak{imp}}
\newcommand\Impdeg{\mathfrak{Imp}}

\newcommand\frakI{\mathfrak{I}}
\newcommand\bbval{\mathbb{v}}
\newcommand\bbres{\mathbb{res}}
\newcommand\bbK{\mathbb{K}}
\newcommand\bbk{\mathbb{k}}
\newcommand\bbG{\mathbb{G}}
\newcommand{\Gal}{\mathrm{Gal}}
\newcommand{\lGal}{\lambda,\mathrm{Gal}}

\newcommand{\Pos}{\mathrm{Pos}}

\newcommand{\Frag}{\mathrm{F}}
\newcommand{\FEone}{\exists_{1}} %%% E_1 FRAGMENT
\newcommand{\Epos}{[\exists_{1}^{\Pos}]} %%% E_1 positive not quite FRAGMENT
\newcommand{\FEpos}{\exists_{1}^{\Pos}} %%% E_1 positive FRAGMENT
\newcommand{\FGal}{\Frag_{\Gal}} %%% GALOIS FRAGMENT
\newcommand{\FlGal}{\Frag_{\lGal}} %%% LAMBDA GALOIS FRAGMENT
\newcommand\FG{\Frag_{\bbG}}
\newcommand\Fk{\Frag_{\bbk}}
\newcommand{\Tred}{\leq_{\rm T}}

\newcommand{\Teq}{\simeq_{\rm T}}
\newcommand{\Toplus}{\oplus_{T}}
\newcommand{\Deq}{\Teq}
\newcommand{\Doplus}{\Toplus}
\newcommand{\Dtext}{Turing}
\newcommand\FF{\mathbb{F}}
\newcommand\NN{\mathbb{N}}
\newcommand\ZZ{\mathbb{Z}}
\newcommand\QQ{\mathbb{Q}}

\newcommand\SAcut[1]{%
	}

\newcommand\Alg{$(\blacktriangle)$}

\newcommand\define[1]{{\bf#1}}
\newcommand\resp[1]{{\rm[}#1{\rm ]}}

\newcommand\AKE{\mathrm{AKE}}
\renewcommand\square\blacksquare
\newcommand\AKEsquaretall\blacklozenge

\title[]{Elimination results for tame fields with finite residue fields}
\author{Sylvy Anscombe}
\author{Blaise Boissonneau}
\thanks{\today.}
\address{Universit\'{e} Paris Cité, Sorbonne Universit\'{e}, CNRS, IMJ-PRG, F-75013 Paris, France}
\email{sylvy.anscombe@imj-prg.fr}
\address{Heinrich Heine University Düsseldorf, Faculty of Mathematics and Natural Sciences, Universitätsstr.~1, 40225 Düsseldorf, Germany}
\email{blaise.boissonneau@hhu.de}

\begin{document}
\begin{abstract}
Building on work of Kuhlmann and Lisinski,
we study the theory of the Hahn series field $\Hs{\FF_{q}}{\QQ}$, over a finite field $\FF_{q}$, equipped with the $t$-adic valuation, in a language of valued fields.
We prove that every formula is equivalent to a formula $\exists y\colon f(x_{1},\ldots,x_{n},y)=0$, for a polynomial $f\in\ZZ[x_{1},\ldots,x_{n},y]$.
\end{abstract}
\maketitle

\section{Introduction}

We extend the known model theory of
tame and separably tame valued fields,
in the sense of Kuhlmann,
specifically those with a finite
residue field.
We obtain a relative quantifier elimination-style result,
complementing completeness and decidability results of Lisinski,
which yields a simpler description of the definable sets in such structures.
This applies in particular
to fields of Hahn series $\Hs{k}{\Gamma}$
for which the residue field $k$ is finite
and the value group $p$-divisible, where $p$ is the characteristic of $k$.

Kuhlmann's theory of tame and separably tame valued fields was developed in
\cite{Kuh16,KuhlmannPal,A-Lambda},
in which it was proved that separably tame valued fields satisfy a range of ``Ax--Kochen/Ershov principles''.
Extensions to this theory can also be found in
\cite{KR-dr},
who investigate ``semi-tame'' valued fields,
and in
\cite{JahnkeKartas},
who study ``roughly tame'' valued fields.
As an example, is it proved in \cite{Kuh16} that tame valued fields of equal characteristic form an $\AKE^{\equiv}$-class:
that is, two such valued fields $(K,v)$ and $(L,w)$ are elementarily equivalent
in the language $\Lval$ of valued fields
if and only if
their residue fields are elementarily equivalent
in the language of rings $\Lring$
and
their value groups are elementarily equivalent
in the language of ordered abelian groups $\Loag$.
On the other hand, it is clear---see~\cite{AK16}---%
that tame valued fields of mixed characteristic are not an $\AKE^{\equiv}$-class.
A deeper analysis of the mixed characteristic case can be found in the recent paper~\cite{KD-mixed}.

Lisinksi's work, including two papers\footnote{These papers appear to be unpublished at the time of writing, but are available in pre-print form.}~\cite{Lis,Lis_second} and his thesis~\cite{Lis_thesis},
contain the following theorem about theories of certain tame valued fields of Hahn series in the language $\Lvalpi$ of valued fields enriched by a constant symbol $\varpi$ for the formal variable.

\begin{theorem*}[{Lisinski, \cite[Theorem 1]{Lis}}]\label{thm:Lisinski}
	For every perfect field $k$ and $p$-divisible pointed value group $(\Gamma,\gamma)$,
	the $\Lvalpi$-theory
	$\Th(\Hs{k}{\Gamma},v_{t},t)$
	is axiomatized by the theory of equicharacteristic tame valued fields,
	with the theory $\Th(k)$ imposed on the residue field
	and the theory $\Th(\Gamma,\gamma)$ imposed on the pointed value group
	(i.e.~the value group with the value of the interpretation of $\varpi$ distinguished)
	and
	together with
	\begin{align*}
		S &=
		\{\exists y\;(f(\varpi,y)=0\wedge v(y)\geq0)\mid\text{$f\in\FF_{p}[x,y]$ such that $f(t,y)$ is monic in $y$ and has root in $\Hs{k}{\Gamma}$}\}.
	\end{align*}
	Consequently, 
	$\Th(\Hs{k}{\Gamma},v,t)$ is decidable
	if and only if
	$\Th(\Gamma,\gamma)$,
	$\Th(k)$,
	and $S$ are decidable.
\end{theorem*}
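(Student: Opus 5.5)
The plan is to reduce the axiomatization to an embedding lemma for tame fields. Let $T$ be the proposed theory: equicharacteristic tame valued fields, $\Th(k)$ on the residue field, $\Th(\Gamma,\gamma)$ on the pointed value group, and $S$. Let $(K,v,\varpi)\models T$. It suffices to show $(K,v,\varpi)\equiv(\Hs{k}{\Gamma},v_t,t)$. I will use Kuhlmann's relative embedding property for tame fields: if $(F,v)$ is a common defectless subfield of two tame fields $(K,v)$ and $(L,w)$, both extensions have separable-algebraic-free (i.e.\ appropriate) residue field and value group data, and the residue field and value group extensions are elementarily equivalent over the data of $F$, then $K\equiv_F L$. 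In equal characteristic this holds when $F$ is defectless. So the goal is to find a common defectless subfield containing the constant.

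Take $F$ to be the relative algebraic closure of $\FF_p(\varpi)$ inside $K$, and similarly $F'$ the relative algebraic closure of $\FF_p(t)$ inside $\Hs{k}{\Gamma}$. Both are algebraic extensions of $\FF_p(t)$, and both are henselian, being relatively algebraically closed in henselian fields. Since $K$ and $\Hs{k}{\Gamma}$ are tame and the subfields are relatively algebraically closed, $F$ and $F'$ are tame, in particular defectless, by Kuhlmann's lemma. The key step is to show $F\cong F'$ over $\varpi\mapsto t$ as valued fields. An algebraic extension of $\FF_p(t)$ inside a henselian field is determined, up to conjugation, by which finite extensions $\FF_p(t)[y]/(f)$ embed. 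This is exactly what $S$ encodes: for monic $f(t,y)$ (after scaling, integrality $v(y)\ge0$), having an integral root in $\Hs{k}{\Gamma}$ transfers to $K$. Conversely, if $f(\varpi,y)$ has a root in $K$ but $f(t,y)$ has none in $\Hs{k}{\Gamma}$, I must argue that some other sentence in $S$ fails. Here I would use henselianity and tameness to reduce the question of a root to a finite list of polynomials. Note that the valuation on $F$ is forced, being the unique henselization-compatible extension, once $v(\varpi)$ is fixed via $\gamma$. The roots of $f$ split into Galois-conjugate classes, and existence of a root of an irreducible factor is equivalent to containing a root of a conjugate. By a standard compactness or Krasner-style argument, the isomorphism type of the henselian algebraic extension $F$ of $\FF_p(t)^h$ is determined by the set of irreducible polynomials with roots. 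Thus $S$, together with its "negative" consequences obtained from positive existence of roots of factorizations, pins down $F$.

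Once $F\cong F'$, I apply the embedding lemma. The residue field $kF$ is algebraic over $\FF_p$ and $\Gamma_F$ is inside the divisible hull of $\ZZ\gamma$. The elementary equivalence of $k_K$ and $k$ over $kF$ should follow from $\Th(k)$. For this I need that the relative algebraic closure of $\FF_p$ in the residue fields is determined by $\Th(k)$, which holds since statements "the polynomial $g$ has a root" are in $\Th(k)$. Similarly, $\Gamma_K\equiv\Gamma$ over $\Gamma_F$ follows from $\Th(\Gamma,\gamma)$, since $\Gamma_F$ is determined by divisibility facts about $\gamma$. Decidability then follows: $T$ is recursively axiomatized relative to the three oracles, and it is complete. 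Conversely, each of the three sets is interpretable in the theory.

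I expect the main obstacle to be the converse direction in identifying $F$. That step requires showing that the non-existence of roots in $K$ also matches, using only positive sentences in $S$. I would first try to prove it by showing that every finite extension of $\FF_p(t)^h$ inside $K$ is generated by an integral primitive element whose minimal polynomial's splitting behavior is detected by positive root statements of auxiliary resolvent polynomials.
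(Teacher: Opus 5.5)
Your architecture (a defectless common subfield containing the constant, then Theorem~\ref{thm:subcompleteness}) is the right one; the paper quotes this result from Lisinski without proof and uses the same architecture in Proposition~\ref{prp:mono}. But there is a genuine gap at the tameness step. You claim $F$ and $F'$ are tame ``by Kuhlmann's lemma'' because they are relatively algebraically closed in tame fields. Lemma~\ref{lem:going_down} requires the residue field extension to be algebraic. Here $Fv$ and $F'v$ are algebraic over $\FF_{p}$, while $k$ (and $Kv\equiv k$) is an arbitrary perfect field. The paper warns that relatively algebraically closed subfields of tame fields need not be tame, and it restricts its own results to finite residue fields precisely so that going down applies. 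For $F'$ you can repair this. $\mathrm{Aut}(\bar{k}/\overline{\FF}_{p})$ acts coefficientwise on $\Hs{\bar{k}}{\Gamma}$ fixing $\FF_{p}(t)$, so an element algebraic over $\FF_{p}(t)$ has finite orbit. Hence each of its coefficients has finite orbit and lies in $k_{0}:=k\cap\overline{\FF}_{p}$. So $F'$ is relatively algebraically closed in the tame field $\Hs{k_{0}}{\Gamma}$, which has algebraic residue field, and going down applies there. For $F\subseteq K$ with $K$ an arbitrary model there is no such argument; tameness of $F$ is available only once you know $F$ is a copy of $F'$.

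Your plan for the ``converse direction'' cannot work. Every sentence of $S$ is existential and so persists to extensions; no combination of root statements from $S$, resolvents included, can forbid extra algebraic elements in $K$. Such negative information could only come from the negative parts of $\Th(k)$ and $\Th(\Gamma,\gamma)$, combined with the fundamental equality for the tame field $F'$. That works because $F'v=k_{0}$ and $vF'$ is the relative divisible hull of $\ZZ\gamma$ in $\Gamma$. But you do not need it: Theorem~\ref{thm:subcompleteness} asks for regularity over the common subfield on one side only. So embed $F'$ into $K$ over $t\mapsto\varpi$ using $S$, and apply the theorem with $K_{1}=\Hs{k}{\Gamma}$, which is regular over the perfect, relatively algebraically closed $F'$. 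The embedding itself needs more care than you give it, because a root of $f(\varpi,Y)$ in $K$ may be a root of the wrong irreducible factor over the henselization $\FF_{p}(\varpi)^{h}$. Use Krasner's lemma and density of $\FF_{p}(t)$ in its henselization. For each finite subextension of $F'$ over (the perfect hull of) $\FF_{p}(t)^{h}$, this gives a primitive element whose minimal polynomial over $\FF_{p}(t)$ stays irreducible there; then apply K\"onig's lemma over the countable field $F'$. Finally, hypothesis (i) over $k_{0}$ does not follow merely from knowing which polynomials over $\FF_{p}$ have roots in $k$. You also need the choice of embedding of $k_{0}$ to be irrelevant. This holds because Frobenius is an automorphism of the perfect field $k$ and topologically generates $\Gal(k_{0}/\FF_{p})$.
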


Combined with a result of Kedlaya from~\cite{Kedlaya}, this yields the following.

\begin{corollary*}[{Lisinski, \cite{Lis}}]
	For every prime power $q=p^{\ell}$,
	both
	$\Th(\Hs{\FF_{q}}{\QQ},v_{t},t)$
	and
	$\Th(\Hs{\FF_{q}}{p^{-\infty}\ZZ},v_{t},t)$
	are decidable.
\end{corollary*}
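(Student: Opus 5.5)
The corollary reduces, via Lisinski's theorem, to three decidability checks: $\Th(\Gamma,\gamma)$, $\Th(k)$, and $S$, for $k=\FF_q$ and $(\Gamma,\gamma)$ equal to $(\QQ,1)$ or $(p^{-\infty}\ZZ,1)$. I will check them in turn and then invoke the theorem. First, the hypotheses of Lisinski's theorem hold. $\FF_q$ is perfect. Both $\QQ$ and $p^{-\infty}\ZZ=\ZZ[1/p]$ are $p$-divisible, and the distinguished element is $\gamma=v_t(t)=1$. The theorem therefore tells us that $\Th(\Hs{\FF_q}{\Gamma},v_t,t)$ is decidable provided the three auxiliary sets are.

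\emph{Residue field.} $\Th(\FF_q)$ is decidable because $\FF_q$ is finite. Its theory is axiomatised by the single sentence describing the finite structure up to isomorphism, so it is recursive.

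\emph{Value group.} The theory of $(\QQ,1)$ is that of a nontrivial divisible ordered abelian group with a positive constant. This theory is complete by quantifier elimination for divisible ordered abelian groups, and it is recursively axiomatised, hence decidable.

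For $(\ZZ[1/p],1)$, I would argue as follows. The group is $p$-divisible and $\ZZ[1/p]/\ell\ZZ[1/p]\cong\ZZ/\ell$ for every $\ell$ prime to $p$. Adding the constant $1$ and divisibility predicates, one gets a regular dense ordered abelian group whose theory is axiomatised by these index conditions together with the conditions that $1$ is not $\ell$-divisible for primes $\ell\ne p$ and is $p^n$-divisible for all $n$. This is a Presburger-style recursive axiomatisation. Completeness follows from the known quantifier elimination for regular ordered abelian groups (Robinson--Zakon) in the language with the constant, and completeness plus recursive axiomatisation gives decidability.

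\emph{The set $S$.} This is the main point, and it is where Kedlaya's result enters. Kedlaya gives an explicit description of the algebraic closure of $\FF_p(t)$ inside the Hahn series $\Hs{\overline{\FF_p}}{\QQ}$. A series lies in it if and only if its support and coefficients are "automatic", and the minimal polynomial of an algebraic element can be effectively related to its generalised-automaton description. Hence, given $f\in\FF_p[x,y]$ monic in $y$, one can effectively decide whether $f(t,y)$ has a root in $\Hs{\FF_q}{\QQ}$: compute the finitely many roots of $f(t,y)$ as automatic Hahn series over $\overline{\FF_p}$, then check whether the coefficients lie in $\FF_q$ and, in the second case, whether the support lies in $\ZZ[1/p]$. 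Both checks are effective from the automaton data. The condition $v(y)\ge0$ is automatic for roots of a monic integral polynomial. This step is the expected obstacle: one has to make sure that Kedlaya's correspondence is effective in the direction "polynomial $\to$ automata", which is the content used by Lisinski. With all three sets decidable, Lisinski's theorem gives the corollary for both groups.
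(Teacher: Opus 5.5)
Your proposal is correct and takes essentially the same route as the paper. The paper obtains the corollary by applying Lisinski's theorem and citing Kedlaya's (effective) automata-theoretic description of the algebraic closure of $\FF_p(t)$ in Hahn series to decide $S$, and it treats the decidability of $\Th(\FF_q)$ and of the pointed groups $(\QQ,1)$ and $(\ZZ[1/p],1)$ as standard; your extra checks of those two inputs are fine, and you rightly identify the effectiveness of Kedlaya's correspondence as the only substantial ingredient.
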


We prove a quantifier elimination-style result that strengthens Lisinski's Theorem
in the case of tame valued fields with a finite
residue field.
We view valued fields as structures in a three-sorted language $\Lval$ of valued fields,
defined in section~\ref{section:Preliminaries}.
We write $\Epos(\Lring)$ for the set of formulas in the language of rings of the form $\exists y\colon f(\underline{x},y)=0$ for a polynomial $f\in\ZZ[\underline{x},y]$.

\begin{introtheorem}\label{introthm:main}
	Let $q=p^{\ell}$ be a prime power.
	For every $\Lval$-formula
	$\varphi(\underline{x})$
	in the language $\Lval$ of valued fields
	there are 
	$\Lval$-formulas
	$\psi_{1}(\underline{x}),\ldots,\psi_{n}(\underline{x})$
	without quantifiers over
	the main field sort $\bbK$ or over the residue field sort $\bbk$, and
	$\Epos(\Lring)$-formulas
	$\chi_{1}(\underline{x}),\ldots,\chi_{n}(\underline{x})$
	(interpreted in the sort for the main field),
	such that 
	\begin{align*}
		(K,v)\models\forall\underline{x}\;\left(\varphi(\underline{x})\leftrightarrow\bigvee_{i=1}^{n}\psi_{i}(\underline{x})\wedge\chi_{i}(\underline{x})\right),
	\end{align*}
	for every tame valued field $(K,v)$ with residue field $\FF_{q}$.
	Moreover the map $\varphi\mapsto\bigvee_{i}\psi_{i}\wedge\chi_{i}$ is computable.
\end{introtheorem}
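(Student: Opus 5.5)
We will prove Theorem~\ref{introthm:main} by showing that, relative to the theory $T_{q}$ of tame valued fields with residue field $\FF_{q}$, the type of a tuple $\underline{a}$ of field elements is determined by two pieces of data: its quantifier-free type in the sorts other than $\bbK$ and $\bbk$ (essentially the value-group data, together with the residues of elements), and the set of $\Epos(\Lring)$-formulas it satisfies. A standard compactness argument then gives the required disjunction, and computability follows by a proof search. The proof search terminates because $T_{q}$ is recursively axiomatized: it consists of the tame axioms, the axioms saying $\bbk\cong\FF_{q}$, and the axiom that the value group is $p$-divisible.

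The core step is an embedding lemma, following Kuhlmann's relative embedding theorem for tame fields. Let $(K,v),(L,w)\models T_{q}$ with $L$ sufficiently saturated, and let $\underline{a}\in K$, $\underline{b}\in L$ satisfy the same quantifier-free formulas in the value-group and residue sorts and the same $\Epos(\Lring)$-formulas. We aim to show $\tp(\underline{a})=\tp(\underline{b})$. First, consider the subfields $\QQ(\underline{a})$ or $\FF_{p}(\underline{a})$. Because the $\Epos$-formulas include equations $f(\underline{x})=0$, the map $\underline{a}\mapsto\underline{b}$ induces a field isomorphism $F=\FF_{p}(\underline{a})\to\FF_{p}(\underline{b})$.

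The decisive point is the relative algebraic closure $F^{h}$ of $F$ in $K$. Its elements are roots of polynomials over $F$. Clearing denominators, the condition that some $g(\underline{a},y)$ has a root is exactly an $\Epos$-formula. Together with the fact that $K$ is perfect, this shows that $F^{\mathrm{rel.alg}}_{K}$ and $F^{\mathrm{rel.alg}}_{L}$ realize the same irreducible polynomials over $F$. By a König-type compactness argument over the finitely many roots, the isomorphism then extends to a field isomorphism $\sigma$ between the relative algebraic closures.

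It remains to check that $\sigma$ respects the valuation. This is the expected main obstacle. Here we would use that the residue field $\FF_{q}$ is finite, and that in a tame field the relative algebraic closure of a subfield is again tame with $p$-divisible value group and residue field inside $\FF_{q}$. Since $\FF_{q}$ is finite and contained in both fields, the $\FF_{q}$-points can be named by roots of $y^{q}-y$. Membership $v(g(\underline{a},c))\ge0$ is controlled by the quantifier-free value-group data. We would aim to show that an algebraic extension with henselization data is determined up to valued isomorphism by its field structure together with the valuation on $F$. The plan is to use henselianity: the valuation on a henselian algebraic extension of $(F,v|_{F})$ is the unique extension. Thus $\sigma$ is valued provided $v|_{F}$ and $w|_{F}$ correspond, and they do, since the quantifier-free value-group data determine $v$ on $\FF_{p}[\underline{a}]$.

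Once $\sigma$ is a valued isomorphism of tame, relatively algebraically closed subfields, Kuhlmann's embedding lemma for tame fields applies. Here the residue field extension is trivial (both are $\FF_{q}$) and the value group extension is handled by the quantifier-free value-group type together with saturation. This yields elementary equivalence over $\underline{a}$, hence $\tp(\underline{a})=\tp(\underline{b})$, and the proof concludes by compactness.
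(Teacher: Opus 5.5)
Your architecture matches the paper's: embed the relative algebraic closure $E$ of $F=\FF_{p}(\underline{a})$ in $K$, get tameness of $E$ by going down, apply Kuhlmann's relative AKE (Theorem~\ref{thm:subcompleteness}), then finish with separation and proof search. But the step you single out as the main obstacle rests on a false principle. Henselianity of $(E,v)$ means that $v|_{E}$ extends uniquely to algebraic extensions \emph{of $E$}. It does not make $v|_{E}$ the unique extension of $v|_{F}$ from $F$ up to $E$. For example, take $p$ odd, $K=\Hs{\FF_{q}}{\QQ}$, $F=\FF_{p}(t)$, and $s=\sqrt{1+t}\in E$ with $v(s-1)>0$. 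The $F$-automorphism $s\mapsto -s$ of $F(s)$ trivially respects $v|_{F}$, yet $v(-s-1)=0$. So $v|_{F}$ has several extensions to $F(s)$, and hence to $E$. Your inference ``$\sigma$ is valued because $v|_{F}$ and $w|_{F}$ correspond'' is therefore a non sequitur, and without a valued $\sigma$ you cannot invoke Kuhlmann's theorem.

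The paper avoids this by first extending the valued isomorphism of $F$ to the henselizations via their universal property. Only then does it pass to relative algebraic closures over that henselian base, where uniqueness of extensions genuinely holds. In your set-up the cleanest repair uses finiteness of the residue field through Theorem~\ref{thm:E1}. The field $E$ is henselian (being relatively algebraically closed in $K$) with residue field $\FF_{q}$ (it contains the Hensel lifts of $\FF_{q}$). Hence $\mathcal{O}_{v}\cap E$ and $\mathfrak{m}_{v}\cap E$ are defined in $E$ by the existential formulas $\zeta_{q}$ and $\epsilon_{q}$. These formulas pass from $\sigma(E)$ up to $L$, giving $\sigma(\mathcal{O}_{v}\cap E)\subseteq\mathcal{O}_{w}$ and $\sigma(\mathfrak{m}_{v}\cap E)\subseteq\mathfrak{m}_{w}$. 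Using $x\notin\mathcal{O}_{v}\Rightarrow x^{-1}\in\mathfrak{m}_{v}$, it follows that \emph{every} field embedding $E\to L$ is valued.

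This matters for a second gap. To get only positive occurrences of the $\chi_{i}$, the compactness argument needs a one-sided statement, as in Proposition~\ref{prp:mono}: if $\underline{b}$ satisfies every $\Epos(\Lring)$-formula that $\underline{a}$ satisfies, and the remaining data agree, then the types are equal. From ``the same data'' you only get Boolean combinations, which may contain universal formulas $\neg\chi$. Your argument does adapt. The condition $f(\underline{x})\neq0$ is the $\Epos(\Lring)$-formula $\exists y\colon f(\underline{x})y=1$. The algebraic step then yields an embedding $E\hookrightarrow L$ rather than an isomorphism. That suffices for Theorem~\ref{thm:subcompleteness}, which needs only a common defectless subfield with $K/E$ regular, once the embedding is known to be valued.

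Several further points need correcting:
\begin{itemize}
\item The value-group data must be the full $\Loag$-type of the values $v(f(\underline{a}))$, including quantifiers over $\bbG$ (which the $\psi_{i}$ may contain). The reason is that $vE$ is the relative divisible hull of $vF$ in $vK$, and you need $vK\equiv_{vE}wL$. Quantifier-free data plus saturation cannot supply this, already for $\underline{a}$ empty.
\item Tameness of $E$ is not a general fact about tame fields. It is Lemma~\ref{lem:going_down}, which needs $Kv$ algebraic.
\item Compactness produces disjuncts of the form $\psi_{i}\wedge\bigwedge_{j}\chi_{ij}$. Collapsing these to a single $\Epos(\Lring)$-formula requires replacing the independent witnesses by one witness, for instance a primitive element, with the needed non-vanishing conditions moved into $\psi_{i}$. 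Combining several equations in one common witness, as in Remark~\ref{rem:basic}, does not do this by itself.
\end{itemize}
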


This theorem is proved after Theorem~\ref{thm:main}.
When the value group is divisible, 
we have
a stronger result:

\begin{introtheorem}[{Theorem~\ref{thm:FqQ}}]\label{introthm:FqQ}
	Let $q=p^{\ell}$ be a prime power.
	For every $\Lring$-formula $\varphi(\underline{x})$ 
	there is an $\Epos(\Lring)$-formula
	$\chi(\underline{x})$
	such that
	\begin{align*}
		\Hs{\FF_{q}}{\QQ}\models\forall\underline{x}\;(\varphi(\underline{x})\leftrightarrow\chi(\underline{x})).
	\end{align*}
	Moreover the map $\varphi\mapsto\chi$ is computable.
\end{introtheorem}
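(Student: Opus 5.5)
I will deduce Theorem~\ref{introthm:FqQ} from Theorem~\ref{introthm:main}, taking the latter as given. The task is then to remove the residual formulas $\psi_i$, which only quantify over the value group sort, by exploiting that $\QQ$ is divisible and that the valuation ring of $\Hs{\FF_{q}}{\QQ}$ is $\exists$-positively definable.

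First, view the $\Lring$-formula $\varphi(\underline{x})$ as an $\Lval$-formula in the field sort. Since $\Hs{\FF_{q}}{\QQ}$ with $v_t$ is tame with residue field $\FF_q$, Theorem~\ref{introthm:main} gives $\varphi\leftrightarrow\bigvee_i\psi_i\wedge\chi_i$. Here each $\psi_i(\underline{x})$ has quantifiers only over the value group $\bbG$, with field variables $\underline{x}$ occurring only inside terms. The field-sort atomic pieces are polynomial equations, and the cross-sort terms are of the form $v(g(\underline{x}))$ and possibly $\res$ or $\ac$. Because the residue field is finite and $\bbk$ is not quantified, residue-sort atoms reduce to finitely many conditions of the form $\res(g/h)=a$ with $a\in\FF_q$. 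Next, apply quantifier elimination for divisible ordered abelian groups (with $\infty$) in $\Loag$. This rewrites each $\psi_i$ as a Boolean combination of atoms of the form $\sum_j n_j v(g_j(\underline{x}))\,\square\,0$ with $\square\in\{=,<\}$. Such an atom is equivalent to $v(\prod_j g_j^{n_j}) \,\square\, 0$ after clearing negative exponents, with care taken over the cases $g_j=0$. So every $\psi_i$ becomes a Boolean combination of statements $v(g)\geq0$, $v(g)>0$, $g=0$, and $\res(g)=a$ for polynomials $g\in\ZZ[\underline{x}]$ and rational functions in them.

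Second, I show that each such atom and its negation are $\exists$-positively definable in $\Hs{\FF_{q}}{\QQ}$, uniformly in the ring language with parameters from $\ZZ$ only. The valuation ring $\mathcal O$ should be existentially definable without parameters. A candidate is $\mathcal O=\{x:\exists y\; y^{q}-y = x^{q}\cdot(\ldots)\}$; more concretely I would use an Artin–Schreier type formula: $x\in\mathfrak m$ iff $\exists y\,(y^{q}-y=x^{\,?})$ suitably combined. The intended reason is that Hensel's lemma makes $y^{q}-y-c$ solvable for $c\in\mathfrak m$ but not for generic units, since the residue field is $\FF_q$ and $\wp$ is not onto it. Negations are handled as follows:
\begin{itemize}
\item $g\neq0$ iff $\exists z\, gz=1$;
\item $v(g)<0$ iff $g^{-1}\in\mathfrak m\wedge g\neq0$;
\item $\res(g)\neq a$ becomes a finite disjunction over $b\neq a$ in $\FF_q$.
\end{itemize}
Constants of $\FF_q$ are defined as roots of $y^{q}-y$, but these roots are not named in the language. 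I would therefore quantify them existentially, together with the minimal polynomial of a generator over $\FF_p$, since each element of $\FF_q$ satisfies an integer polynomial.

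Finally, combine everything into a single $\Epos(\Lring)$-formula. Disjunctions and conjunctions of formulas $\exists y\,f=0$ collapse to one, using $fg=0$ for disjunctions and the norm form $f^{2}+\ldots$ trick for conjunctions. Existential quantifier blocks collapse to a single $y$ because the field is not algebraically closed: the standard norm form of a nontrivial finite extension gives a polynomial with only the trivial zero, which turns systems into single equations. One also needs the Kuhlmann trick of coding several existentials into one. The step I expect to be the real obstacle is this reduction from many existential variables to one: $\exists \underline{y}\,F(\underline{x},\underline{y})=0$ to $\exists y\, f(\underline{x},y)=0$. Presumably the paper establishes a general lemma for this, for instance via elimination or a primitive element argument in a field that is not large, or by exploiting the Frobenius, and I would look for such a lemma and cite it. Computability follows because every step is effective.
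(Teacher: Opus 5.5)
Your route is the paper's: reduce via Theorem~\ref{introthm:main} (i.e.\ Theorem~\ref{thm:main_perfect} with $\Lang=\Lval$), remove the value-group quantifiers by quantifier elimination for divisible ordered abelian groups, translate the remaining atoms into $\Epos(\Lring)$, and merge. Two of these steps are not actually supplied.

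The first is the $\Epos(\Lring)$-definition of $\mathcal{O}_v$ and $\mathfrak{m}_v$. The Artin--Schreier map alone cannot give it. Indeed $\wp_q(K)$ contains $\mathfrak{m}_v$, but it also contains every element of negative value, since $v(\wp_q(y))=qv(y)$ when $v(y)<0$. The missing idea is a bounded set squeezed between $\mathfrak{m}_v$ and $\mathcal{O}_v$ that is the image of a one-variable rational map. The paper uses Fehm's $U_{f,0}=f(K)^{-1}-f(0)^{-1}$, for monic $f\in\ZZ[X]$ with no root modulo $p$ in $\FF_q$ and $\bar f'(0)\neq0$ (Lemma~\ref{lem:Fehm}). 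Then $\mathcal{O}_v=\wp_q^{-1}(U_{f,0})$ and $\mathfrak{m}_v=\wp_q(U_{f,0})$. This is Theorem~\ref{thm:E1}, which you should simply cite. Also, you need not name elements of $\FF_q$: for $g_j\in\mathcal{O}_v$, $P(\res g_1,\dots,\res g_m)=0$ iff $P(g_1,\dots,g_m)\in\mathfrak{m}_v$.

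The second, genuine, gap is the final collapse. A norm form (or the homogenization of a rootless polynomial such as $\wp_q-1$) turns a system of equations in the \emph{same} unknowns into one equation. It does not reduce the number of unknowns. So it handles $\exists y\,(g_1(\underline x,y)=0\wedge g_2(\underline x,y)=0)$. It does not handle $\exists y_1\,g_1(\underline x,y_1)=0\wedge\exists y_2\,g_2(\underline x,y_2)=0$, which is what every conjunction in your Boolean combination produces. Your claim that existential blocks collapse ``because the field is not algebraically closed'' is false as stated, and there is no ready-made coding trick to cite.

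You should know that the paper's proof of Theorem~\ref{thm:FqQ} performs this step via Remark~\ref{rem:basic}. However, the conjunction clause of that remark uses one witness $z$ for both $g_1$ and $g_2$, so it proves only the common-witness version (take $g_1=y$, $g_2=y-1$ to see the difference). The lemma you hoped for is therefore not really there.

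One way to close the gap is to argue pointwise and then use compactness.
\begin{enumerate}
\item Suppose $\underline a$ satisfies $\bigwedge_i\exists y_i\,g_i(\underline x,y_i)=0$. Replace each conjunct with $g_i(\underline a,Y)\equiv0$ by the vanishing of its coefficients.
\item The other witnesses $b_i$ are algebraic over $\FF_p(\underline a)$. So for large $N$ one has $b_i^{p^N}=P_i(\underline a,\theta)/D(\underline a)$, with $D(\underline a)\neq0$ and a single separable $\theta$ satisfying $m(\underline a,\theta)=0$ for some $m\in\ZZ[\underline x][Y]$ monic in $Y$.
\item Substitute $Y=u-D(\underline x)^{-1}$ and clear denominators. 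This turns $m$ into a polynomial equal to $\pm1$ wherever $D(\underline x)=0$, so the condition $D\neq0$ is absorbed.
\item A norm form then gives a single $\Epos(\Lring)$-formula that holds at $\underline a$. In every perfect field in which $\wp_q-1$ has no root it implies the conjunction, by taking $p^N$-th roots of $P_i/D$.
\item Compactness yields a finite disjunction of such formulas, which the product trick merges. Computability follows by proof search, as in Theorem~\ref{thm:main}.
\end{enumerate}
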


Our method is to apply a lemma of Kuhlmann,
to argue that relatively algebraically closed subfields of certain tame valued fields
are themselves tame valued fields --- this doesn't hold for arbitrary tame valued fields.
Thereafter we apply the embedding theorems for tame valued fields developed in \cite{Kuh16,KuhlmannPal,A-Lambda}.
Along the way we prove the following theorem on the definability of henselian valuation rings and valuation ideals (with finite residue fields) by existential $\Lring$-formulas of low complexity,
refining a result from~\cite{Feh15}.

\begin{introtheorem}[{Theorem~\ref{thm:E1}}]\label{introthm:E1}
For each prime power $q=p^{\ell}$,
there exists 
an $\Epos(\Lring)$-formula 
$\zeta_{q}(x)$
\resp{respectively, $\epsilon_{q}(x)$}
that defines the valuation ring
$\mathcal{O}_{v}$
\resp{respectively,
the valuation ideal
	$\mathfrak{m}_{v}$}
in all henselian valued fields
$(K,v)$ with residue field $\FF_{q}$.
\end{introtheorem}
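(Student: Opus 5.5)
We must prove Theorem~\ref{introthm:E1}: there are formulas of the shape $\exists y\colon f(x,y)=0$ defining $\mathcal{O}_v$ and $\mathfrak{m}_v$ uniformly in henselian valued fields with residue field $\FF_{q}$. The plan has three stages. First, write down an existential definition via Hensel's lemma, using a polynomial whose reduction has no roots in $\FF_{q}$. Second, check that a Boolean combination of such definitions is still of the form $\exists y\colon f=0$. Third, remove the auxiliary existential variables by collapsing them into a single $y$.

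For the first stage, take an irreducible polynomial $g\in\FF_{p}[Y]$ of degree $d\geq 2$ that has no root in $\FF_{q}$; we may choose $d$ coprime to $\ell$. Lift $g$ to a monic $G\in\ZZ[Y]$. Since every element of $\FF_q$ is a root of $Y^q-Y$, a natural candidate is
\begin{align*}
\mathfrak{m}_v=\{x : \exists y\ \ y^{q}-y = x\}\cap\ldots
\end{align*}
This alone does not work: for $x\in\mathfrak{m}_v$, Hensel's lemma gives $q$ roots, but for $v(x)<0$ there are also roots, since $y^q-y$ is surjective-like. So we instead use the standard Fehm-style trick. For $x\neq 0$, consider
\begin{align*}
H_x(Y) = x\,G(Y)^{?}\ldots
\end{align*}
Concretely, we aim for a polynomial $F(x,Y)\in\ZZ[x,Y]$ with two properties:
\begin{itemize}
\item[(a)] if $v(x)>0$, then $F(x,Y)$ has a simple root modulo $\mathfrak{m}$ in $\FF_{q}$, so Hensel's lemma gives a root in $K$;
\item[(b)] if $v(x)\leq 0$, then any root would force $G$ to have a root in the residue field $\FF_{q}$, which is impossible.
\end{itemize}
A candidate is $F(x,Y)=Y^{q}-Y - x\cdot\widetilde{G}(Y)$ with $\widetilde{G}$ a suitable homogenization. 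Rescaling via $y\mapsto y/x$ should convert the $v(x)<0$ case into a residue equation of the form $\bar G(\bar z)=0$. The precise choice, namely balancing degrees so that the leading terms dominate according to the sign of $v(x)$, is the step I expect to be the main obstacle. I would first try $F(x,Y)=G(Y)\cdot$ (a power) $- x\,Y^{N}$ and analyse it by Newton polygon. Once $\mathfrak{m}_v$ is defined by $\epsilon_q$, we get $\mathcal{O}_v$ from it: $x\in\mathcal{O}_v$ iff $x^{q}-x\in\mathfrak{m}_v$. This holds because, in residue field $\FF_{q}$, an element $x$ with $v(x)<0$ has $v(x^q-x)=q\,v(x)<0$, and conversely every residue satisfies $a^q=a$. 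Substituting gives $\zeta_q(x):=\epsilon_q(x^q-x)$, which is again of the positive $\exists_1$ form.

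For the second and third stages, we will use the fact that $K$ is not algebraically closed. The residue field is finite, so $K$ has a separable extension of degree $d$, which yields a norm form $N(z_1,\dots,z_d)$ with only the trivial zero. Hence finitely many equations $f_1=\dots=f_r=0$ can be combined into the single equation $N(f_1,\dots,f_r,0,\ldots)=0$, after iterating to get enough variables. To make this uniform, we take $N$ to be the norm form from the unramified extension, lifted over $\ZZ$. Its anisotropy follows from the residue field: a nontrivial zero can be scaled to have minimal valuation $0$, and its reduction is then a nontrivial zero of the residue norm form over $\FF_{q}$. That residue norm form is anisotropic because $\bar G$ is irreducible over $\FF_{q}$. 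Finally, several existential variables $y_1,\dots,y_m$ must be collapsed into one $y$. I expect to handle this by the equivalence of the definitions in the earlier sections, or otherwise to design $F$ from the start so that only one variable is needed. Computability is clear, since every construction is explicit.
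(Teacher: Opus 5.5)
Your proposal has a genuine gap, exactly where you flag it. Stage~1 is the whole content of the theorem, and you never exhibit a polynomial $F(x,Y)$ or verify (a) and (b).

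Your fallback $F=G(Y)^{m}-xY^{N}$ fails on both counts. For $v(x)>0$ its reduction is $\bar G^{m}$, which has no zero in $\FF_{q}$, so (a) cannot come from Hensel's lemma at a residue point. And (b) fails on units: take $q=3$, $G=Y^{2}+1$, $m=1$, $N\not\equiv1\pmod 3$ and $\bar x=2$. Then $\bar F(1)=0\neq\bar F'(1)$, so a root exists although $v(x)=0$.

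Your first candidate $Y^{q}-Y-x\widetilde G(Y)$ can be made to work, but only under a degree condition you did not identify. Take $\widetilde G\in\ZZ[Y]$ monic of degree $d>q$ with no zero mod $p$ in $\FF_{q}$ (e.g.\ $Y^{q+1}-Y^{2}-1$).
\begin{itemize}
\item If $v(x)>0$, the residue polynomial $Y^{q}-Y$ has the simple zero $0$, so Hensel's lemma gives a root; the simple-root version needs no monicity.
\item If $v(x)\le0$ and $v(y)\ge0$, then $v(y^{q}-y)>0\ge v(x)=v(x\widetilde G(y))$.
\item If $v(x)\le0$ and $v(y)<0$, then $v(x\widetilde G(y))\le d\,v(y)<q\,v(y)=v(y^{q}-y)$.
\end{itemize}
The condition $d>q$ is essential. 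If $d<q$ and $p\nmid d$, then in $\Hs{\FF_{q}}{\QQ}$ at $x=t^{-1}$, Hensel's lemma produces a root of value $-1/(q-d)$. Your passage from $\mathfrak{m}_{v}$ to $\mathcal{O}_{v}$ via $\zeta_{q}(x):=\epsilon_{q}(x^{q}-x)$ is correct.

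Stages~2--3 should be dropped rather than repaired. A norm form only merges several equations in the \emph{same} variables. There is no general principle that collapses $\exists y_{1}\cdots\exists y_{m}$ into a single $\exists y$: over $\ZZ$, for instance, one-unknown Diophantine sets are decidable, while Diophantine sets in general are not. So the one-variable shape has to come from the construction itself.

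The paper obtains it without looking for an exact $F$. It uses Fehm's set $U_{f,0}=f(K)^{-1}-f(0)^{-1}$, with $f\in\ZZ[X]$ monic, $\bar f$ rootless in $\FF_{q}$ and $\bar f'(0)\neq0$. This set is only sandwiched, $\mathfrak{m}_{v}\subseteq U_{f,0}\subseteq\mathcal{O}_{v}$, and it is defined by $\exists y\colon f(y)f(0)x=f(0)-f(y)$. The map $\wp_{q}$ then squeezes the sandwich:
\begin{itemize}
\item $\mathcal{O}_{v}=\wp_{q}^{-1}(U_{f,0})$, obtained by substituting $\wp_{q}(x)$ for $x$;
\item $\mathfrak{m}_{v}=\wp_{q}(U_{f,0})$, obtained by writing $x=\wp_{q}\bigl((f(0)-f(y))/f(y)f(0)\bigr)$ and clearing denominators.
\end{itemize}
Each formula therefore has a single quantified variable by construction.
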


% We deduce also the following result on model-theoretic algebraic independence.
% 
% \begin{introcorollary}\label{introcor:veryslim}
% 	Tame valued fields
% 	with finite residue field are very slim, in the sense of \cite{JK},
% 	i.e.~model-theoretic and field-theoretic algebraic closures coincide.
% \end{introcorollary}

\section{Preliminaries}
\label{section:Preliminaries}

Several languages are used in the literature to handle the model theory of certain imperfect fields.
For the sake of concreteness we recall the \define{lambda language} $\Llambda$,
as presented in \cite[\S2.3]{A-Lambda},
which has signature
$$\{l_{I}(x,\underline{y})\mid I\in p^{n},p\in\PP,n<\omega,\underline{y}=(y_{i})_{i<n},n<\omega\},$$
consisting of a family of function symbols which we interpret by the parameterized lambda maps.
There is an $\Llambdaring$-theory $T_{\lambda}$ that axiomatizes this interpretation, so that every field, construed as an $\Lring$-structure, expands uniquely to an $\Llambdaring$-structure that models $T_{\lambda}$.

\begin{remark}
We take care to prove our main results in the context of separably tame valued fields.
However, to help any reader uninterested in imperfect fields we indicate throughout the simplifications to be made for the perfect case.
Essentially, one may replace $\Llambdaring$ with $\Lring$.
\end{remark}

Let $\Lval$ be the usual three-sorted language of valued fields,
with sorts $\bbK$, $\bbk$, and $\bbG$,
where $\bbK$ and $\bbk$ are equipped with $\Lring$, and $\bbG$ is equipped with $\Loag$.
Moreover there is a function symbol $\bbval$ for the valuation, from $\bbK$ to $\bbG$,
and a function symbol $\bbres$ for the residue map, from $\bbK$ to $\bbk$.
Let $\ik$ and $\iG$ be the maps between formulas coming from the standard interpretations of $\bbk$ and $\bbG$ in $\Lval$,
see for example~\cite[Definition 3.15]{AF-fragments}.
Let $\Llambdaval$ be the expansion of $\Lval$ by $\Llambda$ on the sort $\bbK$, so that $\bbK$ is equipped with $\Llambdaval$.

In the interest of allowing our results to be stated {resplendently}, we
follow the notation used in \cite[\S4.2]{A-Lambda}:
a \define{$(\bbk,\bbG)$-expansion}
of $\Llambdaval$ 
\resp{in the perfect case: $\Lval$}
is
any expansion in which
\begin{itemize}
	\item
		the language on the residue field sort $\bbk$ is expanded to a language $\Lk\supseteq\Lring$,
	\item
		the language on the value group sort $\bbG$ is expanded to a language $\LG\supseteq\Loag$, and
        \item
		nothing else is added.
\end{itemize}
We denote such an expansion of $\Llambdaval$ by $\Llambdaval(\Lk,\LG)$
\resp{in the perfect case: such an expansion of $\Lval$ is denoted $\Lval(\Lk,\LG)$}.

Our results are concerned with gaining a tight control on the syntatic complexity of formulas in certain theories.
To that end, for $\Lang=\Lring,\Llambdaring$ we let
\begin{itemize}
	\item
		$\Epos(\Lang)$ be the set of $\Lang$-formulas of the form
		$\exists y\colon\varphi(\underline{x},y)$,
		where
		$y$ is a single variable
		and
		$\varphi$ is an equality of $\Lang$-terms\footnote{We may allow $\varphi$ also to be either $\top$ or $\bot$, but these are equivalent to $0=0$ and $0=1$ (respectively), modulo the theory of fields, so in this setting this choice is immaterial.}.
\end{itemize}
In case $\Lang=\Lring$,
	such equalities of terms are (up to computable equivalence) of the form $f(\underline{x},y)=0$, where $f\in\ZZ[\underline{x},y]$.
In case $\Lang=\Llambdaring$,
	such equalities of terms are a little more complicated
	but are (up to computable equivalence) of the form
	$\exists y\colon t(\underline{x},y)=0$, where $t(\underline{x},y)$ is an $\Llambdaring$-term;
	these terms are studied in greater detail in \cite[\S2]{A-Lambda} and \cite[\S2]{SotoMoreno}.

For a first-order language $\Lang$,
we recall the notion of an \define{$\Lang$-fragment},
as defined in \cite[\S2]{AF-AE}:
it is any set of $\Lang$-formulas that includes $\top$ and $\bot$, which is closed under (finite) conjunctions and disjunctions,
and which is closed under substitution of free variables.
We observe that neither is $\Epos(\Lring)$ an $\Lring$-fragment nor is $\Epos(\Llambdaring)$ an $\Llambdaring$-fragment, since neither is closed under conjunction or disjunction.
We let
\begin{itemize}
\item
	$\FEpos(\Lang)$ be the $\Lang$-fragment generated by $\Epos(\Lang)$.
\end{itemize}

\noindent
Given a $(\bbk,\bbG)$-expansion $\Lang=\Llambdaval(\Lk,\LG)$ of $\Llambdaval$, we let

	\begin{enumerate}[{\bf(i)}]
			\setcounter{enumi}{0}

	\item
		$\Fk(\Lang)$
		be the $\Lang$-fragment of formulas
		$\rho(\underline{s}(\underline{x},\underline{r}))$
		where $\rho$ is an $\Lk$-formula and $\underline{s}$ is a tuple of terms $\bbK\times\bbk\rightarrow\bbk$,

	\item
		$\FG(\Lang)$
		be the $\Lang$-fragment of formulas
		$\gamma(\underline{t}(\underline{x},\underline{\alpha}))$
		where $\gamma$ is an $\LG$-formula and $\underline{t}$ is a tuple of terms $\bbK\times\bbG\rightarrow\bbG$,
	\item
		$\FGal(\Lang)$ be the $\Lang$-fragment generated by
		$\Fk(\Lang)\cup\FG(\Lang)\cup\FEone(\Lring)$,
		and
	\item
		$\FlGal(\Lang)$ be the $\Lang$-fragment generated by
		$\Fk(\Lang)\cup\FG(\Lang)\cup\FEone(\Llambdaring)\cup X$, where $X$ is the set of sentences that axiomatize various finite elementary imperfection degrees.
\end{enumerate}
We call $\FGal(\Lang)$ the \define{Galois fragment} of $\Lang$
and
we call $\FlGal(\Lang)$ the \define{$\lambda$-Galois fragment} of $\Lang$.

\begin{remark}\label{rem:basic}
	It is well known that, given a polynomial $f=\sum_{i\leq n}c_{i}X^{i}$ of degree $n\geq1$, there is a computable map
$\FEpos(\Lring)\rightarrow\Epos(\Lring)$
such that
$\varphi$ and $\epsilon\varphi$ are equivalent modulo the theory of fields in which $f$ has no root.
Concretely, 
modulo this theory,
 	the disjunction
	$\exists y_{1}\;g_{1}(\underline{x},y_{1})=0\vee\exists y_{2}\;g_{2}(\underline{x},y_{2})=0$
	is equivalent to $\exists z\;g_{1}(\underline{x},z)g_{2}(\underline{x},z)=0$,
	and the conjunction
	$\exists y_{1}\;g_{1}(\underline{x},y_{1})=0\wedge\exists y_{2}\;g_{2}(\underline{x},y_{2})=0$
	is equivalent to $\exists z\;h(\underline{x},z)=0$,
	where $h(\underline{x},z)=\sum_{i\leq n}c_{i}g_{1}(\underline{x},z)^{i}g_{2}(\underline{x},z)^{n-i}$.
\end{remark}

\begin{remark}\label{def:En}
Later in section~\ref{section:Diophantine} we will briefly use the following system of notation:
	Given any set $F\subseteq\Form(\Lang)$ and $n\in\NN$,
	we denote by $[\exists_{n}]F$ the set of formulas $\exists y_{1}\ldots\exists y_{n}\;\varphi$, for $\varphi\in F$.
\end{remark}

\section{Diophantine henselian valuation rings and maximal ideals --- revisited}
\label{section:Diophantine}

Let $\H$ denote the $\Lval$-theory of henselian valued fields.
For any $\Lring$-theory $R$, let $\H(R)=\H\cup\iota_{\bbk}R$ denote the theory $\H$ together with axioms that assert the residue field is a model of $R$.

In this section we study the definability of $\mathcal{O}_{v}$ [respectively $\mathfrak{m}_{v}$] in $K$ by existential $\Lring$-formulas $\chi(x)$, without parameters, for $(K,v)\models\H$.
We are concerned both by issues of
\define{uniformity},
i.e.~for which classes $C$ of models of $\H(R)$, for a given $\Lring$-theory $R$,
there is one such formula $\chi(x)$ that defines $\mathcal{O}_{v}$ [respectively, $\mathfrak{m}_{v}$] in $K$ for all $(K,v)\in C$;
and by issues of 
\define{complexity},
i.e.~for which $\Lring$-fragments
$F$ (inside the fragment of existential formulas)
may we find such a formula $\chi(x)$ in $F$. These questions have a rich history and are a very active topic of research, we refer to \cite{FJ-survey} for a survey of recent, albeit by now a decade old, results in this area.

Focusing our attention to equal positive characteristic, the definability of $\ips{\FF_{q}}$ in $\ps{\FF_{q}}$ by an existential $\Lring$-formula (without parameters) was established in~\cite{AK14}, then dramatically simplified and extended to more general residue fields in~\cite{Feh15}.

\begin{theorem}[{\cite[Theorems 2.6 and 3.5]{Feh15}}]\label{thm:Fehm}
For a field $k$ that is either finite or is PAC and does not contain the algebraic closure of its prime subfield,
there is an existential $\Lring$-formula $\eta_{f}(x)$ that defines $\mathcal{O}_{v}$ in $K$ for all $(K,v)\models\H(\Th(k))$.
\end{theorem}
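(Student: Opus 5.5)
The plan is to build the formula by hand from two ingredients: henselianity, which makes certain equations solvable on $\mathfrak{m}_v$, and an Artin--Schreier-type polynomial, whose values detect negative valuation. I treat the finite case $k=\FF_q$ and the PAC case separately, though the pattern is the same.

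In the finite case the key polynomial is $g(Y)=Y^q-Y$. Its reduction vanishes identically on $\FF_q$, and its derivative $-1$ is a unit. Two facts follow for every $(K,v)\models\H(\Th(\FF_q))$:
\begin{enumerate}[(a)]
\item every $a\in\mathfrak{m}_v$ equals $g(y)$ for some $y\in\mathcal{O}_v$, by Hensel's lemma;
\item $g(\mathcal{O}_v)\subseteq\mathfrak{m}_v$, while $v(g(y))=q\,v(y)<0$ whenever $v(y)<0$.
\end{enumerate}
So $x\in\mathcal{O}_v$ iff $g(x)\in\mathfrak{m}_v$, and $x\notin\mathcal{O}_v$ iff $g(x)^{-1}\in\mathfrak{m}_v$. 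It therefore suffices to define $\mathfrak{m}_v$, or a set squeezed between $\mathfrak{m}_v$ and $\mathcal{O}_v$, existentially and uniformly. The first candidate is $\mathfrak{m}_v\subseteq g(K)$, which is existential. However, $g(K)$ also contains elements of negative value of the form $y^q(1-y^{1-q})$. Using (b), these are $q$-th powers times $1$-units.

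To cut these off, I would combine $g$ with an anisotropic form. Fix a lift $N(X_1,X_2)\in\ZZ[X_1,X_2]$ of a norm form of $\FF_{q^2}/\FF_q$, so that $\bar N$ has no nontrivial zero on $\FF_q$. Then $v(N(a,b))=2\min(v(a),v(b))$, and the residue of a suitably normalized $N(a,b)$ ranges over $\FF_q^\times$ in a controlled way. The proposed formula is, schematically,
\[
x\in\mathcal{O}_v\iff\exists y,z_1,z_2\;\bigl(\text{an equation expressing } 1+x\,N(z_1,z_2)^{\pm1}\text{ or } x^{q}-x \text{ as a value of } g\bigr).
\]
The parameters are chosen so that, if $v(x)<0$, the leading term forces a residue lying outside the image of the relevant map. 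For example, one can use an element $c\in\FF_q$ that is not of the form $\bar N$ of a $q$-th power adjusted by $1$-units, together with a parity or $(q,q+1)$-coprimality argument on valuations. I would then verify the two directions separately: Hensel's lemma for $\subseteq$, and a valuation-and-residue computation for $\supseteq$. The same formula then works for all models at once, since only $q$ and the residue field enter. For $\mathfrak{m}_v$, take $x\in\mathfrak{m}_v\iff x\in\mathcal{O}_v\wedge\exists u\,(g(u)=x)$, using (a) and (b).

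In the PAC case, with $k$ not containing $\widetilde{\FF_p}$, the replacement for $g$ is a polynomial $h\in\FF_p[Y]$ with a simple root in $k$ but some irreducible factor with no root in $k$. Such an $h$ exists because some finite extension of the prime field does not embed in $k$. The same Hensel/valuation bookkeeping then applies, with PAC used to guarantee the residue-level solvability needed for the inclusion $\subseteq$.

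The main obstacle is the $\supseteq$ direction in the finite case. One must rule out solutions with negative valuation for \emph{every} value group and \emph{every} henselian field, without parameters such as a uniformizer. I would first try the valuation-divisibility trick, requiring solvability for two exponents $x^{q}$ and $x^{q+1}$ simultaneously so that $q\mid v$ in both forces $v(x)\ge0$ when $\Gamma$ has no $q$-divisibility. Failing that, I would fall back on residue-class obstructions via the anisotropic form $\bar N$, which work regardless of $\Gamma$.
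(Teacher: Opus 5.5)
There is a genuine gap. Your reduction is fine: $g=\wp_q$ maps $\mathcal{O}_v$ into $\mathfrak{m}_v$ and maps $K\setminus\mathcal{O}_v$ outside $\mathcal{O}_v$. So any existentially definable $S$ with $\mathfrak{m}_v\subseteq S\subseteq\mathcal{O}_v$ gives $\mathcal{O}_v=g^{-1}(S)$, which is exactly how the paper obtains $\zeta_q$ in Theorem~\ref{thm:E1}. But constructing such an $S$, uniformly and without parameters, is the entire content of the theorem, and your proposal does not do it. The displayed formula is only schematic, and both mechanisms you offer fail:
\begin{itemize}
\item The valuation-divisibility trick is, by your own caveat, useless when $\Gamma$ is $q$-divisible. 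Models of $\H(\Th(\FF_q))$ include such fields, e.g.\ $\Hs{\FF_q}{\QQ}$, the main object of this paper. Moreover, since $q\Gamma$ is a subgroup, a divisibility condition on $v(x)$ can never by itself force $v(x)\geq0$.
\item The residue obstruction via $\bar N$ is vacuous. The norm $\FF_{q^2}^\times\to\FF_q^\times$ is surjective and every element of $\FF_q$ is a $q$-th power, so the element $c$ you want does not exist. Also, the homogeneous form $N(z_1,z_2)$ takes values of every valuation in $2\Gamma$, so it bounds nothing.
\end{itemize}

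The missing idea is the inversion trick of Lemma~\ref{lem:Fehm}. Take a monic $f$ whose reduction has no zero in the residue field; your dehomogenized norm form $f(Y)=N(Y,1)$ would do, as would $\wp_q-1$. Then $f(y)$ is a unit if $v(y)\geq0$, and has value $\deg(f)\,v(y)<0$ if $v(y)<0$. Hence $f(K)^{-1}\subseteq\mathcal{O}_v$, with no hypothesis on $\Gamma$. If $\bar f'(\bar a)\neq0$, Hensel's lemma gives $f(a)^{-1}+\mathfrak{m}_v\subseteq f(K)^{-1}$. So $U_{f,a}=f(K)^{-1}-f(a)^{-1}$ (or Fehm's $U_f=f(K)^{-1}-f(K)^{-1}$) is squeezed between $\mathfrak{m}_v$ and $\mathcal{O}_v$. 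Then either your $g$-preimage, or Fehm's $\eta_f(x)\equiv\exists r,s\,(x=r+s\wedge\varphi_f(r)\wedge\wp_q(s)=0)$ (which adds the $q$ roots of $\wp_q$ in $K$), defines $\mathcal{O}_v$.

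Your PAC sketch points the wrong way. For infinite $k$ no nonzero polynomial $h$ satisfies $h(\mathcal{O}_v)\subseteq\mathfrak{m}_v$, so the bookkeeping of (a) and (b) does not transfer. An $h$ with a simple root in $k$ is actively harmful, since $1/h$ is unbounded near a Hensel lift of that root. What is needed is an $f$ over the prime field with \emph{no} root in $k$; this is where the hypothesis that $k$ does not contain the algebraic closure of its prime field is used. Then again $f(K)^{-1}\subseteq\mathcal{O}_v$. PAC then supplies, for each residue, a residue-level solution of an equation built from $1/\bar f$ (a rational point on an absolutely irreducible curve, with nonvanishing derivative). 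Hensel's lemma lifts these, so that combinations such as differences of elements of $f(K)^{-1}$ exhaust $\mathcal{O}_v$.
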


Fehm's results also give existential definitions uniform over various families of finite residue fields.
This perspective was generalized in \cite{AF17}, which isolated conditions on the residue field that characterize the existence of an existential definition of $\mathcal{O}_{v}$ \resp{respectively $\mathfrak{m}_{v}$} in some $(K,v)\models\Hen(\Th(k))$, equivalently uniformly across all $(K,v)\models\H(\Th(k))$.

Our present aim is to find $\Epos(\Lring)$-formulas to define $\mathcal{O}_{v}$
\resp{respectively $\mathfrak{m}_{v}$}
uniformly in the theory of henselian valued fields of equal characteristic with residue field $\FF_{q}$.
The formulas constructed in \cite{Feh15} bear closer examination.
We will make frequent use of the Artin--Schreier polynomial $\wp_{q}(X)=X^{q}-X$.

\begin{lemma}[{Reformulation of~\cite[Lemma 2.1]{Feh15}}]\label{lem:Fehm}
Let $(K,v)$ be a valued field
and
let $f\in\mathcal{O}_{v}[Y]$ be any monic polynomial for which the residue $\bar{f}$ has no zero in the residue field $Kv$.
Then for $y\in K$ we have
\begin{align*}
f(y)
&\in
\left\{
\begin{array}{lll}
K\setminus\mathcal{O}_{v}
&\text{if }v(y)<0\text{ and}\\
\res_{v}^{-1}(\bar{f}(Kv))\subseteq\mathcal{O}_{v}^{\times}
&\text{if }v(y)\geq 0.\\
\end{array}
\right.
\end{align*}
Now let $a\in K$
and set
$
U_{f,a}:=f(K)^{-1}-f(a)^{-1}
$.
We have
\begin{enumerate}[{\bf(i)}]
\item
$f(K)^{-1}\subseteq\mathcal{O}_{v}$
\item
$U_{f,a}\subseteq\mathcal{O}_{v}$
\item
	If $a\in\mathcal{O}_{v}$ and $f'(a)\notin\mathfrak{m}_{v}$, and $(K,v)$ is henselian, then $\mathfrak{m}_{v}\subseteq U_{f,a}$
\end{enumerate}
\end{lemma}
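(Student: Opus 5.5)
The plan is to prove the dichotomy for $f(y)$ first, and then read off (i)--(iii) from it. Write $f(Y)=Y^{d}+c_{d-1}Y^{d-1}+\dots+c_{0}$ with $c_{i}\in\mathcal{O}_{v}$ and $d\geq1$; since $\bar f$ has no zero in $Kv$, in fact $d\geq 1$ is automatic.

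\textbf{The dichotomy.} If $v(y)<0$, then $v(y^{d})=d\,v(y)<v(c_{i}y^{i})$ for all $i<d$, because $v(c_{i})\geq0$ and $i\,v(y)>d\,v(y)$. Hence $v(f(y))=d\,v(y)<0$, so $f(y)\notin\mathcal{O}_{v}$. If $v(y)\geq0$, then $f(y)\in\mathcal{O}_{v}$ and $\res_{v}(f(y))=\bar f(\bar y)\in\bar f(Kv)$. Since $\bar f$ has no zero, this residue is nonzero, so $f(y)\in\mathcal{O}_{v}^{\times}$. This gives the displayed inclusion $\res_{v}^{-1}(\bar f(Kv))\subseteq\mathcal{O}_{v}^{\times}$, because $0\notin\bar f(Kv)$. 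Note also that $f$ has no zero in $K$, so $f(K)^{-1}$ and $f(a)^{-1}$ make sense.

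\textbf{Items (i) and (ii).} For (i), take $y\in K$. If $v(y)<0$, then $v(f(y)^{-1})>0$. Otherwise $f(y)$ is a unit, so $f(y)^{-1}\in\mathcal{O}_{v}$. For (ii), $U_{f,a}$ is a difference of two elements of $\mathcal{O}_{v}$ by (i), and $\mathcal{O}_{v}$ is a ring.

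\textbf{Item (iii).} This is the only step using henselianity. Let $m\in\mathfrak{m}_{v}$; we seek $y$ with $f(y)^{-1}=f(a)^{-1}+m$. Set $u=f(a)$, a unit by the dichotomy since $a\in\mathcal{O}_{v}$. Then $f(a)^{-1}+m=u^{-1}(1+um)$, and $1+um$ is a unit. So the target value is $b:=u(1+um)^{-1}$, which satisfies $b\equiv u\pmod{\mathfrak{m}_{v}}$. Consider $g(Y)=f(Y)-b\in\mathcal{O}_{v}[Y]$. We have $g(a)=u-b\in\mathfrak{m}_{v}$ and $g'(a)=f'(a)\notin\mathfrak{m}_{v}$. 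By Hensel's lemma, valid in henselian $(K,v)$ for simple residue roots, there is $y\in\mathcal{O}_{v}$ with $y\equiv a$ and $g(y)=0$. Then $f(y)=b\neq0$, and $f(y)^{-1}-f(a)^{-1}=b^{-1}-u^{-1}=m$. Hence $m\in U_{f,a}$.

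The main obstacle is only bookkeeping: checking that $b$ is well defined and nonzero and reduces to $\bar u$, so that Hensel's lemma applies. This is immediate from $um\in\mathfrak{m}_{v}$.
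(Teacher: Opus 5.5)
Your argument is correct. The paper gives no proof of its own for this lemma and simply cites Fehm's Lemma~2.1. What you wrote is the standard argument: the valuation estimate gives the dichotomy, and for (iii) you Hensel-lift $f(Y)-(f(a)^{-1}+m)^{-1}$ at $a$. The paper carries out this same kind of step in its proof of Lemma~\ref{lem:rational_function_2}.

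One correction. Your claim that $d\geq1$ is automatic because $\bar f$ has no zero is false. The constant $f=1$ is monic with zero-free residue, and for it the case $v(y)<0$ of the dichotomy fails, since $f(y)=1\in\mathcal{O}_{v}$. So non-constancy of $f$ is an implicit hypothesis of the lemma, not a consequence of the no-zero condition. It is harmless for the rest: in (iii) it is forced by $f'(a)\notin\mathfrak{m}_{v}$, and (i), (ii) hold trivially for $f=1$.
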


It is clear that $U_{f,a}$ is definable by an $\Epos(\Lring)$-formula
$\varphi_{f,a}(x)\equiv\exists y\colon f(y)f(a)x=f(a)-f(y)$,
which however includes parameters coming from the coefficients of $f$ and from $a$.
Fehm also considered $U_{f}:=f(K)^{-1}-f(K)^{-1}$,
which is definable by an $\exists_{2}(\Lring)$-formula $\varphi_{f}(x)\equiv\exists y,z\colon f(y)f(z)x=f(z)-f(y)$, with parameters now only coming from the coefficients of $f$.
In the case that
$k=\FF_{q}$ is finite,
Fehm introduced the $[\exists_{2}]\FEpos(\Lring)$-formula
$\eta_{f}(x)\equiv\exists r,s\colon (x=r+s\wedge\varphi_{f}(r)\wedge\wp_{q}(s)=0)$,
using $f\in\FF_{p}[X]$ from \cite[Lemma 2.4]{Feh15},
which then defines $\mathcal{O}_{v}$ without additional parameters, in all $(K,v)\models\H(\Th(\FF_{q}))$.

% Moreover, the formula given in~\cite{Feh15} defines the valuation ring uniformly in any model of $\H(\Th(k))$, where $k$ is a given PAC field not containing an algebraically closed subfield.
% Later in~\cite{AF17} a comprehensive analysis of the problem described an elementary property of residue fields that characterizes the non-definability of valuation rings by existential $\Lring$-formulas without parameters.

\begin{theorem}\label{thm:E1}
For each prime power $q=p^{\ell}$,
there exists a formula
	$\zeta_{q}(x)$
\resp{respectively
	$\epsilon_{q}(x)$}
in $\Epos(\Lring)$
that defines~%
$\mathcal{O}_{v}$
\resp{respectively
	$\mathfrak{m}_{v}$},
in all $(K,v)\models\H(\Th(\FF_{q}))$.
\end{theorem}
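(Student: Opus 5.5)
We will produce $\epsilon_{q}$ first and then obtain $\zeta_{q}$ from it by a substitution, avoiding Fehm's two-variable set $U_{f}$. The idea is to realise $\mathfrak{m}_{v}$ as the image of a single rational function $K\to K$, namely $y\mapsto\wp_{q}(y)/f(y)$ for a suitable monic $f\in\FF_{p}[Y]$ with no zero in $\FF_{q}$. The degree of $f$ must be chosen so that poles of $y$ are sent outside $\mathfrak{m}_{v}$.

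\emph{Step 1 (choice of $f$).} Fix a prime $d$ with $d\le q$ and $d\nmid\ell$. Such a prime exists: if every prime $\le q$ divided $\ell$, then their product, which is at least $2$ and grows faster than $\ell<q$, would divide $\ell$, which is absurd for small cases as well. Concretely, $d=2$ works when $\ell$ is odd, and $d=3$ (note $q\ge4$) works when $\ell$ is even and $3\nmid\ell$, and so on. Let $f\in\FF_{p}[Y]$ be monic and irreducible of degree $d$. Since $\gcd(d,\ell)=1$, $f$ remains irreducible over $\FF_{q}$, so $\bar f$ has no zero in $Kv=\FF_{q}$. Hence $f$ has no zero in $K$, because a zero $y$ would have $v(y)\ge0$ by integrality and its residue would be a zero of $\bar f$. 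So division by $f(y)$ is harmless.

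\emph{Step 2 ($\epsilon_{q}$).} Put $\epsilon_{q}(x)\equiv\exists y\colon x\,f(y)=y^{q}-y$, which lies in $\Epos(\Lring)$. The inclusion $\{x:\epsilon_{q}(x)\}\subseteq\mathfrak{m}_{v}$ is a case split on $v(y)$.
\begin{itemize}
\item If $v(y)\ge0$, then by Lemma~\ref{lem:Fehm} $f(y)\in\mathcal{O}_{v}^{\times}$. Also $\wp_{q}(y)\in\mathfrak{m}_{v}$, since $\bar y\in\FF_{q}$. Hence $x\in\mathfrak{m}_{v}$.
\item If $v(y)<0$ and $d<q$, then $v(\wp_{q}(y))=qv(y)$ and $v(f(y))=dv(y)$, so $v(x)=(q-d)v(y)<0$.
\item If $v(y)<0$ and $d=q$, the leading terms cancel in residue and $\bar x=1$, so $x$ is a unit.
\end{itemize}
In either case with $v(y)<0$ we get $x\notin\mathfrak{m}_{v}$.

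For the converse, fix $x\in\mathfrak{m}_{v}$ and consider $g(Y)=Y^{q}-Y-xf(Y)\in\mathcal{O}_{v}[Y]$. Its reduction is $Y^{q}-Y$, which has $0$ as a simple root. By henselianity, $g$ has a root $y\in\mathfrak{m}_{v}$, and this $y$ witnesses $\epsilon_{q}(x)$.

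\emph{Step 3 ($\zeta_{q}$).} Because $Kv=\FF_{q}$, we have $x\in\mathcal{O}_{v}$ if and only if $\wp_{q}(x)\in\mathfrak{m}_{v}$.
\begin{itemize}
\item If $x\in\mathcal{O}_{v}$, then $\bar x\in\FF_{q}$ is fixed by Frobenius to the power $\ell$, so $\wp_{q}(x)\in\mathfrak{m}_{v}$.
\item If $v(x)<0$, then $v(\wp_{q}(x))=qv(x)<0$, so $\wp_{q}(x)\notin\mathfrak{m}_{v}$.
\end{itemize}
So we set $\zeta_{q}(x)\equiv\epsilon_{q}(x^{q}-x)$, that is, $\exists y\colon (x^{q}-x)f(y)=y^{q}-y$, which is again in $\Epos(\Lring)$ with integer coefficients.

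The only delicate point is Step 1 together with the valuation estimate for $v(y)<0$. This is where the obstacle lies: the degree condition $d\le q$ is what prevents poles from landing in $\mathfrak{m}_{v}$, and it must be reconciled with $f$ having no root in $\FF_{q}$ while having coefficients in $\FF_{p}$. If the prime-existence argument proved awkward, I would first try taking $d$ to be a prime dividing neither $\ell$ nor anything else, chosen just below $q$ via Bertrand's postulate.
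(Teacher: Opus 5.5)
\textbf{Step 2 proves the opposite of the inclusion it is meant to prove.}
The set defined by $\epsilon_{q}(x)\equiv\exists y\colon xf(y)=\wp_{q}(y)$ is the whole image $\{\wp_{q}(y)/f(y)\mid y\in K\}$. So $\{x:\epsilon_{q}(x)\}\subseteq\mathfrak{m}_{v}$ requires $\wp_{q}(y)/f(y)\in\mathfrak{m}_{v}$ for \emph{every} $y$, including those with $v(y)<0$.

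Your case analysis shows that for $v(y)<0$ and $d\le q$ the value $x=\wp_{q}(y)/f(y)$ lies \emph{outside} $\mathfrak{m}_{v}$. Since that same $y$ witnesses $\epsilon_{q}(x)$, you have exhibited elements satisfying $\epsilon_{q}$ that are not in $\mathfrak{m}_{v}$. Concretely, in $\ps{\FF_{q}}$ the choice $y=t^{-1}$ gives $v(x)=-(q-d)<0$ when $d<q$, and a unit $x$ when $d=q$.

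So $\epsilon_{q}$ does not define $\mathfrak{m}_{v}$, and Step 3, which needs exactly that, fails with it. The mix-up is between image and pre-image. The requirement ``inputs of negative value must be sent outside $\mathfrak{m}_{v}$'' is correct for $\zeta_{q}$, which is a pre-image under $\wp_{q}$. For a set presented as an image, every input must land inside.

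\textbf{The construction can be repaired by reversing the inequality: take $\deg f=d>q$.}
Then $v(y)<0$ gives $v(x)=(q-d)v(y)>0$, and the case $v(y)\ge 0$ is exactly as you wrote. Step 1 also becomes trivial, so your loose prime-existence argument is no longer needed. For instance, $f(Y)=Y\wp_{q}(Y)+1=Y^{q+1}-Y^{2}+1$ is identically $1$ on $\FF_{q}$. Alternatively, any irreducible polynomial of prime degree $d>q$ works, since then $d\nmid\ell$ automatically.

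The one new point is that $g(Y)=Y^{q}-Y-xf(Y)$ now has leading coefficient $-x\in\mathfrak{m}_{v}$. So for the reverse inclusion you need the non-monic form of Hensel's lemma, which holds in every henselian valued field. You can use Newton's lemma with $v(g(0))>0=2v(g'(0))$, or lift the coprime factorisation $\bar g=Y\cdot(Y^{q-1}-1)$. With this fix, Step 3 goes through unchanged.

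\textbf{Comparison with the paper.}
The repaired route genuinely differs from the paper's. The paper places no constraint on $\deg f$. Instead it uses Fehm's set $U_{f,0}=f(K)^{-1}-f(0)^{-1}$, which is only sandwiched as $\mathfrak{m}_{v}\subseteq U_{f,0}\subseteq\mathcal{O}_{v}$ (Lemma~\ref{lem:Fehm}). It then obtains $\mathfrak{m}_{v}=\wp_{q}(U_{f,0})$ and $\mathcal{O}_{v}=\wp_{q}^{-1}(U_{f,0})$. Your repaired version instead realises $\mathfrak{m}_{v}$ directly as the image of a single rational function of lower degree in $y$.
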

\begin{proof}
By~\cite[Lemma 2.4]{Feh15}, for example, we may choose monic $f\in\ZZ[X]$ of which the reduction $\bar{f}$ modulo $p$ has no zero in $\FF_{q}$ and such that $\bar{f}'(0)$ is not zero.
Observe that the images of $f$ and $a=0$ in $\mathcal{O}_{v}[X]$ and $\mathcal{O}_{v}$ satisfy the hypotheses of Lemma~\ref{lem:Fehm} in any $(K,v)\models\H(\Th(\FF_{q}))$.
We consider the
$\Epos(\Lring)$-formula
$\varphi_{f,0}(x)\equiv\exists y\colon f(y)f(0)x=f(0)-f(y)$.
In any model $(K,v)\models\H(\Th(\FF_{q}))$,
$\varphi_{f,0}(x)$ defines $U_{f,0}$.
Moreover, since the valuation ring $\mathcal{O}_{v}$ is the pre-image of $U_{f,0}$ under $\wp_{q}$ by henselianity,
it is defined by
\begin{align*}\zeta_{q}(x)\equiv\varphi_{f,0}(\wp_{q}(x)).\end{align*}
Similarly, the valuation ideal $\mathfrak{m}_{v}$ is the image of $U_{f,0}$ under $\wp_{q}$,
and so is defined by
\begin{align*}\epsilon_{q}(x)\equiv\exists y\colon (f(y)f(0))^{q}x=(f(0)^{q}-f(y))^{q}-(f(0)-f(y))(f(y)f(0))^{q-1}.\end{align*}
Both
$\zeta_{q}(x)$ and $\epsilon_{q}(x)$ are
$\Epos(\Lring)$-formulas.
\end{proof}

This proves Theorem~\ref{introthm:E1} from the introduction.

\subsection{Digression: definability by rational functions}

\begin{remark}\label{rem:rational_functions}
The rational function
$\beta_{f}(y)=(f(0)-f(y))/f(y)f(0)$
plays a hidden role in the preceding proof.
Briefly we consider the language $\Lfield$ of fields, by which we mean $\Lring$ expanded by a unary function symbol ${\cdot}^{-1}$. We view any field as an $\Lfield$-structure by letting $x^{-1}$ be the multiplicative inverse for nonzero $x$, and writing $0^{-1}=0$.
Then $\Epos(\Lfield)$ is the set of formulas $\exists y\;f(\underline{x},y)=0$ for a rational function $f\in\ZZ(\underline{X},Y)$.
The $\Epos(\Lfield)$-formula
$\varphi'(x)\equiv\exists y\colon x=\beta_{f}(y)$
is equivalent to $\varphi_{f,0}(x)$ modulo the theory $T_{f}$ of fields in which $f$ has no roots.
Similarly,
the formula
$\zeta'(x)\equiv\exists y\colon\wp_{q}(x)=\beta_{f}(y)$
is equivalent to $\zeta_{q}(x)$,
and $\epsilon'(x)\equiv\exists y\colon x=\wp_{q}(\beta_{f}(y))$
is equivalent to $\epsilon_{q}(x)$,
modulo $T_{f}$.

We observe that each of $\varphi'(x)$, $\zeta'(x)$, and $\epsilon'(x)$ is
\define{separated},
i.e.~of the form
$\exists y\colon g(\underline{x})=h(y)$,
for a polynomial $g\in\FF_{p}[\underline{X}]$ and a rational function $h\in\FF_{p}(Y)$,
Moreover, the formulas $\varphi'(x)$ and $\epsilon(x)$ are even
\define{rational},
i.e.~of the form
$\exists y\colon x=h(y)$,
for a rational function $h\in\FF_{p}(Y)$.
\end{remark}

\begin{example}
For a more concrete example, we may take $f=\wp_{q}-1$ and $a=0$ to get the rational function
$\beta_{f}(Y)=(\wp_{q}(Y)-1)^{-1}-1$.
The separated $\Epos(\Lfield)$-formula
$\zeta'(x)$ is
	equivalent modulo $T_{f}$ to
\begin{align*}
&\exists y\colon \wp_{q}(x)=\frac{1}{\wp_{q}(y)-1}-1,
\end{align*}
and the rational $\Epos(\Lfield)$-formula
$\epsilon'(x)$ is (equivalent to)
\begin{align*}
&\exists y\colon x=\wp_{q}\left(\frac{1}{\wp_{q}(y)-1}\right).
\end{align*}
Of course, $\zeta'(x)$ and $\epsilon'(x)$
	define $\mathcal{O}_{v}$ and $\mathfrak{m}_{v}$, respectively, uniformly in all
$(K,v)\models\H(\Th(\FF_{q}))$.
\end{example}

For a prime power $q=p^{\ell}$,
we consider the rational function
$\alpha_{q}(U,Y)=(\wp_{q}(Y)-U)^{-1}\in\FF_{p}(U,Y)$.

\begin{lemma}\label{lem:rational_function}
Let $(K,v)$ be a valued field with $Kv\cong\FF_{q}$,
and let $u\in\mathcal{O}_{v}^{\times}$.
Since $\wp_{q}$ vanishes on $\FF_{q}$, $\alpha_{q}(u,Y)$ well-defines a function $K\rightarrow K$.
More precisely, we have
\begin{align*}
\alpha_{q}(u,y)
&\in
\left\{
\begin{array}{lll}
	\mathfrak{m}_{v}
&\text{if }v(y)<0\text{ and}\\
	-u^{-1}+\mathfrak{m}_{v}
&\text{if }v(y)\geq 0,\\
\end{array}
\right.
\end{align*}
for $y\in K$.
\end{lemma}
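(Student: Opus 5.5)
The plan is to analyse the denominator $\wp_{q}(y)-u$ according to the valuation of $y$, in the style of Lemma~\ref{lem:Fehm}, and then invert. We first check that the function is well defined, i.e.\ that $\wp_{q}(y)\neq u$ for every $y\in K$. This will follow from the case analysis below, since in both cases $\wp_{q}(y)-u$ has finite valuation.

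\textbf{Case $v(y)<0$.} Then $v(y^{q})=qv(y)<v(y)$, so
\begin{align*}
v(\wp_{q}(y))=qv(y)<0.
\end{align*}
Since $v(u)=0$, the ultrametric inequality (with strict inequality of valuations) gives $v(\wp_{q}(y)-u)=qv(y)<0$. Hence $\wp_{q}(y)-u\neq0$, and
\begin{align*}
v(\alpha_{q}(u,y))=-qv(y)>0,
\end{align*}
that is, $\alpha_{q}(u,y)\in\mathfrak{m}_{v}$.

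\textbf{Case $v(y)\geq0$.} Reduce modulo $\mathfrak{m}_{v}$. The residue $\bar{y}$ lies in $Kv\cong\FF_{q}$, so $\bar{y}^{q}=\bar{y}$, and therefore $\wp_{q}(y)\in\mathfrak{m}_{v}$. Write $\wp_{q}(y)-u=-u(1-u^{-1}\wp_{q}(y))$. Here $u^{-1}\wp_{q}(y)\in\mathfrak{m}_{v}$, so $\wp_{q}(y)-u$ is a unit; in particular it is nonzero. Its inverse is
\begin{align*}
-u^{-1}(1-m)^{-1}\quad\text{with } m\in\mathfrak{m}_{v}.
\end{align*}
Since $(1-m)^{-1}\in1+\mathfrak{m}_{v}$ and $u^{-1}$ is a unit, this inverse lies in $-u^{-1}+\mathfrak{m}_{v}$.

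The only point requiring care is the identification $\wp_{q}(\mathcal{O}_{v})\subseteq\mathfrak{m}_{v}$. This uses exactly that the residue field has $q$ elements, so that every residue is a root of $X^{q}-X$. Notably, no henselianity is needed anywhere in the argument.
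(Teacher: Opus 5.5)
Your proof is correct and matches the paper's intended argument. The paper states this lemma without proof, as a direct instance of Lemma~\ref{lem:Fehm} applied to the monic polynomial $f=\wp_{q}-u$, whose residue $\wp_{q}-\bar{u}$ has no zero in $\FF_{q}$; that lemma gives $f(y)\notin\mathcal{O}_{v}$ for $v(y)<0$ and $f(y)\in-u+\mathfrak{m}_{v}$ for $v(y)\geq0$, and your case analysis unpacks exactly this before inverting (and you are right that no henselianity is needed).
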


For each $u\in\mathcal{O}^{\times}_{v}$, the lemma implies that the rational function $\alpha_{q}(u,Y)$ restricts to a function
$K\setminus\mathcal{O}_{v}\rightarrow\mathfrak{m}_{v}$
and also to a function
$\mathcal{O}\rightarrow -u^{-1}+\mathfrak{m}_{v}$.
The former is not in general surjective:
for example neither $0$ nor $t$
is in the image of $\alpha_{q}(u,X)$ when $K=\ps{\FF_{q}}$.

\begin{lemma}\label{lem:rational_function_2}
Let $(K,v)\models\H(\Th(\FF_{q}))$
and let $u\in\mathcal{O}_{v}^{\times}$.
Then
	$\alpha_{q}(u,w+\mathfrak{m}_{v})=-u^{-1}+\mathfrak{m}_{v}$,
for each $w\in\mathcal{O}_{v}$.
\end{lemma}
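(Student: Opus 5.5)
The plan is to prove the two inclusions separately. The inclusion $\alpha_{q}(u,w+\mathfrak{m}_{v})\subseteq-u^{-1}+\mathfrak{m}_{v}$ is immediate from Lemma~\ref{lem:rational_function}. Every $y\in w+\mathfrak{m}_{v}$ lies in $\mathcal{O}_{v}$, so $v(y)\geq0$ and hence $\alpha_{q}(u,y)\in-u^{-1}+\mathfrak{m}_{v}$.

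For the reverse inclusion, fix $c\in-u^{-1}+\mathfrak{m}_{v}$; note $c\in\mathcal{O}_{v}^{\times}$, so $c\neq0$. We must find $y\in w+\mathfrak{m}_{v}$ with $(\wp_{q}(y)-u)^{-1}=c$, i.e.\ a root in $w+\mathfrak{m}_{v}$ of
\begin{align*}
g(Y):=Y^{q}-Y-u-c^{-1}\in\mathcal{O}_{v}[Y].
\end{align*}
Since $c^{-1}\in-u+\mathfrak{m}_{v}$, the element $b:=u+c^{-1}$ lies in $\mathfrak{m}_{v}$. Hence the reduction of $g$ is $\bar{g}=Y^{q}-Y$, which vanishes at every element of $Kv\cong\FF_{q}$, in particular at $\bar{w}$. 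Its derivative is $\bar{g}'=-1$, so $\bar{w}$ is a simple root.

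By henselianity of $(K,v)$ and Hensel's lemma, the simple residue root $\bar{w}$ lifts to a root $y\in\mathcal{O}_{v}$ of $g$ with $\bar{y}=\bar{w}$, that is, $y\in w+\mathfrak{m}_{v}$. It remains to check that $\alpha_{q}(u,y)$ is genuinely defined and equals $c$. We have $\wp_{q}(y)-u=c^{-1}\neq0$, so $\alpha_{q}(u,y)=c$, as required.

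No step here is a serious obstacle. The only points needing care are that $u+c^{-1}\in\mathfrak{m}_{v}$, which is what makes the reduction of $g$ equal to $\wp_{q}$ and hence gives $\bar{w}$ as a residue root, and that the root is simple, which holds because $\wp_{q}'=-1$ in characteristic $p$. Once these are in place, the proof is just the standard simple-root form of Hensel's lemma.
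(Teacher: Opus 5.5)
Your proposal is correct and is essentially the paper's proof. The paper likewise takes the left-to-right inclusion as immediate, and for the converse applies Hensel's lemma to the monic polynomial $\wp_{q}(w+Y)-u-c^{-1}$, whose constant term lies in $\mathfrak{m}_{v}$ and whose derivative at $0$ is $-1$; your root-lifting of $\bar{w}$ for $Y^{q}-Y-u-c^{-1}$ is the same argument up to the translation by $w$.
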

\begin{proof}
The left-to-right inclusion is clear.
	Let $x\in\mathfrak{m}_{v}$.
	We seek $y\in\mathfrak{m}_{v}$ such that $\alpha_{q}(u,w+y)=-u^{-1}+x$.
	Equivalently we aim to find a root of the polynomial $f(Y)=\wp(w+Y)-u-(-u^{-1}+x)^{-1}\in\mathcal{O}_{v}[Y]$ in $\mathfrak{m}_{v}$.
	The polynomial is monic, $f(0)\in\mathfrak{m}_{v}$, and $f'(0)=-1\notin\mathfrak{m}_{v}$.
	Thus by henselianity of $(K,v)$ there exists $y\in\mathfrak{m}_{v}$ such that $f(y)=0$ as required.
\end{proof}

\begin{proposition}\label{prp:rational_function_NOT2}
Suppose $p\neq2$.
Let $(K,v)\models\H(\Th(\FF_{q}))$
and
let $u\in\mathcal{O}_{v}^{\times}$.
We have
\begin{enumerate}[{\bf(i)}]
\item
	$\{\alpha_{q}(u,w+y)+\alpha_{q}(-u,w+y)\mid y\in\mathfrak{m}_{v}\}=\mathfrak{m}_{v}$, for each $w\in\mathcal{O}_{v}$.
\end{enumerate}
It follows that
\begin{enumerate}[{\bf(i)}]
\setcounter{enumi}{1}
\item
	$\exists y\colon x=\alpha_{q}(u,y)+\alpha_{q}(-u,y)$ is a rational $\Epos(\Lfield)$-formula defining $\mathfrak{m}_{v}$.
\end{enumerate}
\end{proposition}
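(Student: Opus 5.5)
The plan is a direct computation followed by two applications of henselianity. First I will simplify the sum: writing $s=\wp_{q}(z)$, we have
\begin{align*}
\alpha_{q}(u,z)+\alpha_{q}(-u,z)=\frac{1}{s-u}+\frac{1}{s+u}=\frac{2s}{s^{2}-u^{2}}.
\end{align*}
For $z=w+y$ with $w\in\mathcal{O}_{v}$ and $y\in\mathfrak{m}_{v}$, we have $z\in\mathcal{O}_{v}$. Since $Kv\cong\FF_{q}$ and $\wp_{q}$ vanishes on $\FF_{q}$, it follows that $s\in\mathfrak{m}_{v}$. Hence $s^{2}-u^{2}\equiv-u^{2}\not\equiv0$ modulo $\mathfrak{m}_{v}$ is a unit, the expression is well defined, and it lies in $\mathfrak{m}_{v}$. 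This gives the inclusion $\subseteq$ in (i).

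For the reverse inclusion in (i), fix $x\in\mathfrak{m}_{v}$ and proceed in two henselian steps.
\begin{enumerate}
\item[1.] Find $s\in\mathfrak{m}_{v}$ with $2s/(s^{2}-u^{2})=x$, i.e.\ a root in $\mathfrak{m}_{v}$ of $g(S)=xS^{2}-2S-xu^{2}\in\mathcal{O}_{v}[S]$. We have $g(0)=-xu^{2}\in\mathfrak{m}_{v}$ and $g'(0)=-2$, which is a unit precisely because $p\neq2$. This is where the hypothesis enters, and it is the only delicate point. Hensel's lemma (simple root of $\bar g$ at $0$) gives $s\in\mathfrak{m}_{v}$ with $g(s)=0$. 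Since $s^{2}-u^{2}$ is a unit, this is equivalent to the desired equation.
\item[2.] Find $y\in\mathfrak{m}_{v}$ with $\wp_{q}(w+y)=s$. This is the same argument as in Lemma~\ref{lem:rational_function_2}: the polynomial $h(Y)=\wp_{q}(w+Y)-s$ is monic over $\mathcal{O}_{v}$, with $h(0)=\wp_{q}(w)-s\in\mathfrak{m}_{v}$ (as $\wp_{q}(w)\in\mathfrak{m}_{v}$) and $h'(0)=-1$. Henselianity yields the root $y\in\mathfrak{m}_{v}$.
\end{enumerate}

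For (ii), I would show that the set defined by $\exists y\colon x=\alpha_{q}(u,y)+\alpha_{q}(-u,y)$ is exactly $\mathfrak{m}_{v}$. For parameter-freeness one takes $u=1$, or any integer unit. First, the expression is defined for every $y\in K$. If $v(y)\geq0$ then $\wp_{q}(y)\in\mathfrak{m}_{v}$, so $\wp_{q}(y)\neq\pm u$. If $v(y)<0$ then $v(\wp_{q}(y))=qv(y)<0$, so again $\wp_{q}(y)\neq\pm u$. By Lemma~\ref{lem:rational_function}:
\begin{itemize}
\item when $v(y)<0$, both summands lie in $\mathfrak{m}_{v}$;
\item when $v(y)\geq0$, the computation above shows the sum lies in $\mathfrak{m}_{v}$.
\end{itemize}
So the defined set is contained in $\mathfrak{m}_{v}$. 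Conversely, part (i) with $w=0$ shows that every element of $\mathfrak{m}_{v}$ is attained. Finally, clearing denominators shows that the formula is a rational $\Epos(\Lfield)$-formula of the required shape.
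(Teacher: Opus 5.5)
Your proof is correct and follows the paper's argument. Both reduce, by henselian lifting through $\wp_{q}$ on $w+\mathfrak{m}_{v}$, to finding $s\in\mathfrak{m}_{v}$ with $2s/(s^{2}-u^{2})=x$, and both obtain (ii) from (i) together with Lemma~\ref{lem:rational_function}. The one difference is that you solve $xS^{2}-2S-xu^{2}=0$ by applying Hensel's lemma (non-monic form) directly at $S=0$, while the paper extracts a square root $1+u^{2}x^{2}=(1+xs)^{2}$ and uses the quadratic formula; your route is slightly cleaner. A small correction: in mixed characteristic $h'(0)=qw^{q-1}-1$ is only congruent to $-1$ modulo $\mathfrak{m}_{v}$, which still makes it a unit, so the argument is unaffected.
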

\begin{proof}
Let $(K,v)\models\H(\Th(\FF_{q}))$
and 
let $u\in\mathcal{O}_{v}^{\times}$.
With the aim of proving {\bf(i)} we let
	$x\in\mathfrak{m}_{v}$.
	We will find $y\in\mathfrak{m}_{v}$ such that
$x=\alpha_{q}(u,w+y)+\alpha_{q}(-u,w+y)$.
	Since every element of $\mathfrak{m}_{v}$ admits a pre-image in $w+\mathfrak{m}_{v}$ under the polynomial map $\wp_{q}$%
	, by henselianity, it suffices to find $z\in\mathfrak{m}_{v}$ such that
$2z/(z^{2}-u^{2})=x$.
By henselianity of $(K,v)$,
and since $p\neq2$,
we have
	$1+x^{2}\mathcal{O}_{v}\subseteq(1+x\mathfrak{m}_{v})^{2}$.
Therefore 
$1+u^{2}x^{2}=(1+xs)^{2}$,
	for some $s\in\mathfrak{m}_{v}$,
and so the polynomial
$xZ^{2}-Z-u^{2}x$ has roots in $K$:
either a single root $z=0$, if $x=0$, or
otherwise
two roots
\begin{align*}
z_{1}:=\frac{1+(1+xs)}{2x}
\end{align*}
and
\begin{align*}
	z_{2}:=\frac{1-(1+xs)}{2x}=-\frac{s}{2}\in\mathfrak{m}_{v}.
\end{align*}
As mentioned above,
	by henselianity there is $y\in\mathfrak{m}_{v}$ such that $\wp(w+y)=z_{2}$, which proves {\bf(i)}.
Claim {\bf(ii)} follows by combining~{\bf(i)} with Lemma~\ref{lem:rational_function}.
\end{proof}

\begin{observation}[The Kochen operator]\label{obs:Kochen}
Let $p\neq2$ and take $u\in\ZZ$ not divisible by $p$.
Consider the rational function
$\gamma_{q}^{[u]}(Y):=
\alpha_{q}(u,Y)+\alpha_{q}(-u,Y)\in\ZZ(Y)$
appearing in the previous proof.
By taking an average of $\alpha_{q}(u,Y)$ and $\alpha_{q}(-u,Y)$, and dividing by $p$, we recover the usual $p$-Kochen operator:
\begin{align*}
\frac{1}{2p}\gamma_{q}^{[u]}(Y)
=
\frac{1}{2p}\left(\alpha_{q}(u,Y)+\alpha_{q}(-u,Y)\right)
=
\frac{1}{2p}\left(\frac{1}{\wp_{q}(Y)-u}+\frac{1}{\wp_{q}(Y)+u}\right)
=
\frac{1}{p}\frac{\wp_{q}(Y)}{\wp_{q}(Y)^{2}-u^{2}},
\end{align*}
the image of which is the valuation ring in every unramified henselian $p$-valued field of mixed characteristic.
However we observe that the image of $\gamma_{q}^{[u]}(Y)$ is already the maximal ideal in every model of $\H(\Th(\FF_{q}))$ regardless of ramification and even in equal characteristic $p$.
\end{observation}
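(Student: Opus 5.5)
The plan has two parts: an algebraic identity that holds in $\ZZ(Y)$, or in $\QQ(Y)$ once we divide by $2p$, and a claim about the image of $\gamma_{q}^{[u]}$, which is essentially Proposition~\ref{prp:rational_function_NOT2} specialised to an integer $u$.

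For the identity, put everything over the common denominator $(\wp_{q}(Y)-u)(\wp_{q}(Y)+u)=\wp_{q}(Y)^{2}-u^{2}$. Then
\begin{align*}
\frac{1}{\wp_{q}(Y)-u}+\frac{1}{\wp_{q}(Y)+u}=\frac{2\wp_{q}(Y)}{\wp_{q}(Y)^{2}-u^{2}},
\end{align*}
and dividing by $2p$ gives the displayed formula. I will then note that for $q=p$, $u=1$ this is the classical $p$-Kochen operator $\frac{1}{p}\frac{Y^{p}-Y}{(Y^{p}-Y)^{2}-1}$. Its image being the valuation ring of an unramified henselian $p$-valued field is the standard fact from the theory of $p$-valued fields, which I will quote rather than reprove, since it is only mentioned for comparison.

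The substantive claim is the last one. Fix $(K,v)\models\H(\Th(\FF_{q}))$, necessarily of characteristic $p$ or mixed characteristic with residue characteristic $p$. Since $p\nmid u$, the image of $u$ in $Kv\cong\FF_{q}$ is a nonzero element of $\FF_{p}$, so $u\in\mathcal{O}_{v}^{\times}$, and likewise $-u\in\mathcal{O}_{v}^{\times}$.

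Lemma~\ref{lem:rational_function} then shows that $\alpha_{q}(\pm u,Y)$ is defined everywhere on $K$. For the inclusion of the image in $\mathfrak{m}_{v}$, split on $v(y)$.
\begin{itemize}
\item If $v(y)<0$, both summands lie in $\mathfrak{m}_{v}$.
\item If $v(y)\geq0$, the summands lie in $-u^{-1}+\mathfrak{m}_{v}$ and $u^{-1}+\mathfrak{m}_{v}$ respectively, so their sum lies in $\mathfrak{m}_{v}$.
\end{itemize}
For the reverse inclusion, apply Proposition~\ref{prp:rational_function_NOT2}\textbf{(i)} with $w=0$, which uses the hypothesis $p\neq2$. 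It gives that every $x\in\mathfrak{m}_{v}$ equals $\gamma_{q}^{[u]}(y)$ for some $y\in\mathfrak{m}_{v}$. This is exactly part \textbf{(ii)} of that proposition.

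I expect no real obstacle. The only point to check is that nothing in Proposition~\ref{prp:rational_function_NOT2} used equal characteristic. Its proof relies only on three things: henselianity, the residue field being $\FF_{q}$, and $2$ being a unit. So the conclusion holds regardless of ramification and in either characteristic, as the Observation asserts.
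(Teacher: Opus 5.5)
Your proposal is correct and takes essentially the paper's route. The identity is a common-denominator computation, the inclusion of the image in $\mathfrak{m}_{v}$ comes from Lemma~\ref{lem:rational_function}, and surjectivity onto $\mathfrak{m}_{v}$ is Proposition~\ref{prp:rational_function_NOT2}. You are right that its proof uses only henselianity, residue field $\FF_{q}$ (so $\wp_{q}'(w)\equiv-1$ modulo $\mathfrak{m}_{v}$), and $2\in\mathcal{O}_{v}^{\times}$, so it also applies in mixed characteristic.
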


\begin{question}
For $(K,v)\models\H(\Th(\FF_{q}))$, by Theorem~\ref{thm:E1} and Remark~\ref{rem:rational_functions}, the valuation ring $\mathcal{O}_{v}$ is definable by a separated $\Epos(\Lfield)$-formula.
Is it definable by a rational $\Epos(\Lfield)$-formula?
\end{question}

\begin{question}
For $k$ PAC and not containing the algebraic closure of its prime field, and $(K,v)\models\H(\Th(k))$, we know by~\cite{Feh15} that $\mathcal{O}_{v}$ is definable by an $\exists$-formula.
By examining the proof of \cite[Theorem~3.5]{Feh15}, it may be seen immediately to be $\exists_{2}\exists_{4}$-definable.
Is $\mathcal{O}_{v}$ definable by a formula that is $\exists_{2}$, or $\exists_{1}$, or even separated?
\end{question}

\section{Elimination to the Galois fragment}
\label{section:Elimination}

Let $F/C$ be a field extension.
We define two related notions of imperfection degree:
the \define{(relative) imperfection degree} of $F$ over $C$, denoted $\impdeg(F/C)$,
is the cardinality of a $p$-basis of $F$ over $C$,
and the \define{absolute imperfection degree} of $F$, denoted $\impdeg(F)$, is $\impdeg(F/\FF)$.
The \define{elementary (relative) imperfection degree}
(also called ``Ershov degree/invariant'')
of $F$ over $C$
is defined to be $\Impdeg(F/C)=\impdeg(F/C)$ if $\impdeg(F)$ is finite;
or $\Impdeg(F/C)=\infty$ if $\impdeg(F)$ is infinite.
In this paper we will always be in the setting that $F/C$ is separable,
in which case $\impdeg(F)=\impdeg(F/C)+\impdeg(C)$ and $\Impdeg(F)=\Impdeg(F/C)+\Impdeg(C)$, writing $\infty+n=n+\infty=\infty$ for all $n\in\NN\cup\{\infty\}$.

Following \cite{Kuh16},
a valued field $(K,v)$ is \define{separably tame}
if it is henselian and separably defectless, has $p$-divisible value group, and perfect residue field,
where $p$ is the residue characteristic exponent,
i.e.~$p=1$ when $\ch(Kv)=0$, and $p=\ch(Kv)$ otherwise.
Let $\STVF$
be
the $\Llambdaval$-theory of
separably tame valued fields
(in particular extending $T_{\lambda}$).
For a theory $R$ of fields
and a theory $G$ of pointed ordered abelian groups,
we denote by
$\STVF(R,G)$
the $\Lval$-theory
$\STVF\cup\iota_{\bbk}R\cup\iota_{\bbG}G$.
For $I$ a finite or cofinite subset
of $\NN\cup\{\infty\}$
we let $\STVF_{I}$ be the extension of $\STVF$ by $\Lring$-axioms that assert the elementary imperfection degree of the field in the main sort is in $I$.
We also will write $\STVF_{I}(R,G)=\STVF_{I}\cup\iota_{\bbk}R\cup\iota_{\bbG}G$.

The following theorem states the ``subcompleteness'' form of the Ax--Kochen/Ershov principles for separably tame valued fields.

\begin{theorem}[{\cite{Kuh16} for $\frakI=0$; \cite{KuhlmannPal} for $\frakI<\infty$; \cite{A-Lambda} for $\frakI=\infty$}]\label{thm:subcompleteness}
	Let $(K_{1},v_{1}),(K_{2},v_{2})\models\STVF$
	have common
	separably defectless subfield $(K,v)$,
	with $K_{1}/K$ regular%
	\footnote{A field extension $L/K$ is \define{regular} if $L/K$ is linearly disjoint from the algebraic closure of $K$.}%
	.
	Then
	$(K_{1},v_{1})\equiv_{K}(K_{2},v_{2})$
	if and only if
	\begin{enumerate}[{\bf(i)}]
	\item
	$k_{v_{1}}\equiv_{k_{v}}k_{v_{2}}$,
	\item
	$\Gamma_{v_{1}}\equiv_{\Gamma_{v}}\Gamma_{v_{2}}$,
	and
	\item
	$\Impdeg(K_{1}/K)=\Impdeg(K_{2}/K)$.
	\end{enumerate}
\end{theorem}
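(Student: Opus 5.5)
The forward direction is routine, and I would take it first. Suppose $(K_{1},v_{1})\equiv_{K}(K_{2},v_{2})$. Any $\Lk$-sentence with parameters from $k_{v}$ becomes, via $\ik$ and choices of preimages of the parameters under $\bbres$, an $\Lval$-sentence with parameters from $K$; this gives {\bf(i)}. The same argument with $\iG$ and $\bbval$ gives {\bf(ii)}. For {\bf(iii)}: since $K_{i}/K$ is separable (regularity for $K_{1}$; elementary equivalence transfers this to $K_{2}$, because $p$-independence of finite tuples from $K$ is expressible), and since for each $n$ the statement ``$\impdeg(K_{i}/K)\geq n$'' is expressible by a sentence with parameters from $K$ (using a $p$-basis of $K$ in the finite case, or the lambda functions in general), we get $\Impdeg(K_{1}/K)=\Impdeg(K_{2}/K)$.

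For the converse I would use the standard embedding criterion. Replace $(K_{2},v_{2})$ by a $|K_{1}|^{+}$-saturated elementary extension $(K_{2}^{*},v_{2}^{*})$. It then suffices to show that $(K_{1},v_{1})$ embeds over $K$ into $(K_{2}^{*},v_{2}^{*})$ by a map that is elementary, and symmetrically for a back-and-forth. To build the embedding, I would first observe the following. Because $(K,v)$ is separably defectless and $K_{1}/K$ is regular, the relative separable-algebraic closure of $K$ in $K_{1}$ contains the henselization $K^{h}$, and the embedding can be fixed on $K^{h}$ by uniqueness of henselizations. Next I would choose a transcendence basis of $K_{1}/K^{h}$ adapted to the valuation, i.e. a valuation-independent part realizing a transcendence basis of $k_{v_{1}}/k_{v}$, a $\QQ$-independent part of $\Gamma_{v_{1}}/\Gamma_{v}$, and a $p$-basis part. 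Using {\bf(i)}, {\bf(ii)} and saturation, I would realize the residue field and value group extensions inside $K_{2}^{*}$, and use {\bf(iii)} to match the $p$-independent elements. This embeds a subfield $L\subseteq K_{1}$ over which $K_{1}$ is immediate and separably algebraic modulo this data.

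The final step is Kuhlmann's relative embedding property for separably tame fields, and I expect it to be the main obstacle. The claim is that if $(L,v)$ is separably defectless, $(K_{1},v_{1})$ is separably tame over it, and $L$ is already embedded, then the embedding extends to $K_{1}$ provided the residue and value group embeddings are elementary. This is where separable tameness is essential. The henselization of $L$ must again be separably defectless (the ``generalized stability theorem''), and immediate transcendental extensions must be handled via pseudo-Cauchy sequences of transcendental type, which separably tame fields realize by saturation. Algebraic-type sequences have to be ruled out using separable defectlessness and $p$-divisibility of the value group. In the imperfect case I would additionally need to keep $K_{1}/L$ separable throughout, which is exactly what {\bf(iii)} and the $\Llambda$-structure guarantee. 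Once the embedding $K_{1}\hookrightarrow K_{2}^{*}$ over $K$ is obtained, I would conclude elementarity by applying the same argument to the pair $K_{1}\subseteq K_{2}^{*}$, or via a Łoś--Tarski-style symmetric back-and-forth, yielding $(K_{1},v_{1})\equiv_{K}(K_{2},v_{2})$.
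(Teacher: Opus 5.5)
Your forward direction is fine. The converse has a genuine gap, and it cannot be closed for the statement as printed. The paper gives no proof here, only citations to Kuhlmann, Kuhlmann--Pal and Anscombe, and those versions impose conditions on $\Gamma_{v_1}/\Gamma_v$ and $k_{v_1}/k_v$: torsion-freeness and separability, not regularity of $K_1/K$. Regularity does not imply these conditions in general; it does when $k_{v_1}/k_v$ is algebraic, by Lemma~\ref{lem:going_down}. The gap is in your lifting step. Your adapted transcendence basis lifts only a transcendence basis of $k_{v_1}/k_v$ and a maximal $\QQ$-independent subset of $\Gamma_{v_1}/\Gamma_v$. Over the resulting $L$, $\Gamma_{v_1}/vL$ is torsion and $k_{v_1}/Lv$ is algebraic. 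The relative embedding property you then invoke is false over such an $L$: an element of value $\gamma$ with $n\gamma\in vL$ carries residue data linking the two sorts, which (i) and (ii) do not see.

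Here is a counterexample. Let $p$ be odd, and let $K$ be the relative algebraic closure of $\FF_q(t)$ in $\Hs{\FF_q}{\ZZ[1/p]}$; by Lemma~\ref{lem:going_down} it is tame with $vK=\ZZ[1/p]$ and $Kv=\FF_q$. Give $K(x)$ the monomial valuation with $v(x)=\tfrac12$; its residue field is $\FF_q(z)$ with $z=\overline{x^2/t}$. Let $K_1$ be a maximal immediate extension of the henselized perfect hull of $K(x)$. This is a tame field with $vK_1=\tfrac12\ZZ[1/p]$ and $K_1v=\FF_q(z)^{1/p^\infty}$. In each $\FF_q(z^{1/p^m})$ the element $z$ has odd order $p^m$ at $z^{1/p^m}=0$, so $z\notin\FF_q^{\times}\cdot((K_1v)^{\times})^2$. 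Hence every $y=xu\in K_1$ of value $\tfrac12$ has $\overline{y^2/t}=z\bar u^2\notin\FF_q$. It follows that the relative algebraic closure $K'$ of $K$ in $K_1$ has $K'v=\FF_q$ and $\tfrac12\notin vK'$, so $vK'=vK$. Since $K$ is henselian and defectless, $K'=K$. Thus $K_1/K$ is regular and $t$ is not a square in $K_1$. Now put $K_2=K_1(\sqrt t)$, again tame. As $(\sqrt t/x)^2$ has residue $z^{-1}$, we get $vK_2=vK_1$ and $K_2v=K_1v(\sqrt z)\cong K_1v$ over $\FF_q$. So (i)--(iii) hold, yet $\exists y\,(y^2=t)$ holds in $K_2$ and fails in $K_1$. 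In your scheme, with $L=K(x^2/t)$, the image of $x$ would have to be a square root of $ts'$ where $\overline{s'}=\sigma(z)$ is a non-square. No such element exists in $K_2^*$, since $t$ is a square there.

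If you add the missing hypotheses, your outline becomes the standard route of the cited sources. They hold in the paper's application, where the residue field is finite and Corollary~\ref{cor:going_down} supplies them. Two steps would still need repair.

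\begin{itemize}
\item The immediate part. After adjoining a limit $y$ of a transcendental-type pseudo-Cauchy sequence, $L(y)^h$ is in general no longer separably defectless, because the generalized stability theorem covers only extensions without transcendence defect. So algebraic-type sequences over $L(y)^h$ cannot simply be ruled out. Kuhlmann handles this with Henselian Rationality: an immediate function field $F$ of transcendence degree one over a tame $K'$ satisfies $F^h=K'(y)^h$ for some $y$. He combines this with saturation to glue the finitely generated pieces.
\item The final step. ``Applying the same argument to the pair'' is circular as stated. You should first prove relative model completeness: an extension of separably tame fields with elementary value group and residue field extensions, and the appropriate imperfection condition, is elementary. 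This is done by alternately embedding saturated elementary extensions of each field into the other and taking the common union of the two interleaved elementary chains. Only then does a single embedding of $K_1$ into $K_2^*$ over $K$, respecting elementary $\rho$ and $\sigma$, yield $K_1\equiv_K K_2$.
\end{itemize}
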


One of the key tools in the proofs of Theorem~\ref{thm:subcompleteness} is the following lemma.

\begin{lemma}[{Going down, \cite[Lemma 3.7]{Kuh16}}]\label{lem:going_down}
Let $(L,v)$ be a
separably tame valued field
and
let $(K,v)\subseteq(L,v)$ be such that $L/K$ is regular.
Suppose that $Lv/Kv$ is algebraic.
Then
$(K,v)$ is separably tame,
$Lv=Kv$,
and
$vL/vK$ is torsion-free.
\end{lemma}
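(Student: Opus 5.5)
The plan is to exploit regularity of $L/K$ in the form: $L/K$ is separable and $K$ is relatively algebraically closed in $L$. Combined with the henselianity of $(L,v)$, this pulls roots of suitable polynomials from $L$ back into $K$. I would carry out the following steps in order.

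\emph{Henselianity of $K$.} Since $(L,v)$ is henselian, the henselization $K^{h}$ of $(K,v)$ embeds over $K$ into $L$. As $K^{h}/K$ is separable algebraic and $K$ is relatively algebraically closed in $L$, we get $K^{h}=K$.

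\emph{Separable part of $Lv/Kv$.} Let $\zeta\in Lv$ be separable over $Kv$ with minimal polynomial $g$. Lift $g$ to a monic $f\in\mathcal{O}_{K}[X]$. Then $\zeta$ is a simple root of $\bar f$, so Hensel's lemma in $L$ gives $a\in L$ with $f(a)=0$ and $\bar a=\zeta$. This $a$ is algebraic over $K$, hence $a\in K$ and $\zeta\in Kv$. Thus $Lv/Kv$ is purely inseparable.

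\emph{Perfectness of $Kv$ and the $p$-part of $vL/vK$.} This is the step I expect to be the main obstacle, because Hensel's lemma does not apply to inseparable residue polynomials. Given $b\in Kv$, take a lift $c\in\mathcal{O}_{K}$ and, if $v$ is nontrivial on $K$, some $d\in K$ with $v(d)>0$. Consider the separable polynomial $f=X^{p}+dX-c$, whose reduction $X^{p}-b$ has the unique root $b^{1/p}\in Lv$, using that $Lv$ is perfect. I would then use that $L$ is separably defectless with $p$-divisible value group and perfect residue field. A root $a$ of $f$ generates a separable extension $L(a)/L$ satisfying the fundamental equality. Comparing with the residue and value data, and choosing $a$ as a root closest to a suitable approximation in $L$ (a Kaplansky-type argument), I aim to conclude that $a\in L$. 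If that direct route fails, I would instead invoke the known fact that separably tame fields are separably algebraically maximal, so that this Artin--Schreier-type extension cannot be immediate. Either way $a\in K$, and hence $b^{1/p}\in Kv$. The same style of argument, applied to $X^{p}+dX-c$ with $v(c)=p\gamma$, shows that $\gamma\in vL$ with $p\gamma\in vK$ already lies in $vK$. Together with the first two steps this gives $Lv=Kv$ and that $vK$ is $p$-divisible.

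\emph{Torsion-freeness and defectlessness.} For the prime-to-$p$ part of $vL/vK$, suppose $m\gamma\in vK$ with $p\nmid m$, and pick $x\in L$ with $v(x)=\gamma$. Write $x^{m}=cu$ with $c\in K$ and $u\in\mathcal{O}_{L}^{\times}$. Since $Lv=Kv$, we can absorb the residue of $u$ into $c$, so that $u\in1+\mathfrak{m}_{L}$. Hensel's lemma then gives $u=w^{m}$ with $w\in L$, so $x/w$ is a root of $X^{m}-c$. This root is algebraic over $K$, hence lies in $K$, and so $\gamma\in vK$. Finally, for separable defectlessness of $K$, take a finite separable $E/K$. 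By regularity $E$ is linearly disjoint from $L$ over $K$, so $[EL:L]=[E:K]$. The extension $EL/L$ is defectless, and the facts $Lv=Kv$ and $vL/vK$ torsion-free let us bound its ramification index and residue degree by those of $E/K$. This yields the fundamental equality for $E/K$, and so $(K,v)$ is separably tame.
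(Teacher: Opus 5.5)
Your third step carries the real content of the lemma, and it does not work as proposed. The claim that $f=X^{p}+dX-c$ has a root in $L$, for an arbitrary lift $c$ of $b$ and an arbitrary $d$ with $v(d)>0$, is false even for tame $L$. Take $L=K=\Hs{\FF_{q}}{\QQ}$, $d=-t^{p-1}$ and $c=1-t^{p-1}+\theta t^{p}$, where $\theta\in\FF_{q}$ is not of the form $w^{p}-w$ with $w\in\FF_{q}$. Then $f(1+tW)=t^{p}(W^{p}-W-\theta)$, so every root of $f$ generates the unramified extension of $L$ of degree $p$ with residue field $\FF_{q^{p}}$. That extension satisfies the fundamental equality and is not immediate. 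So neither separable defectlessness, nor separable algebraic maximality, nor a Kaplansky-type approximation can force a root into $L$: a separable residue extension of degree $p$ is exactly what separable tameness of $L$ still allows. Choosing $c$ more cleverly is circular. Indeed, $f$ has a root in $L$ (equivalently, in $K$) precisely when $c=a^{p}+da$ for some $a\in K$, and such an $a$ is a lift of $b^{1/p}$ in $K$, which is what you are trying to produce. The same objection applies to your treatment of the $p$-part of $vL/vK$.

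Your last step also assumes what is to be proved. From $vL\cap vE=vK$ and $Lv=Kv$ you only get $e(EL/L)\ge e(E/K)$ and $f(EL/L)\ge f(E/K)$, since values and residues from $E$ stay independent over $L$. The reverse bounds amount to $v(EL)=vE+vL$ and $(EL)v=Ev$. Because $[EL:L]=[E:K]$ and $EL/L$ is defectless, they are equivalent to $E/K$ being defectless.

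One way to close both gaps is through the absolute ramification field $K^{r}$ of the henselian field $K$:
\begin{itemize}
\item $LK^{r}/K^{r}$ is regular, so degrees over $K^{r}$ are preserved on passing to $LK^{r}$.
\item $LK^{r}$ is a separable-algebraic extension of $L$, hence separably tame.
\item Its residue field is perfect, algebraic over $Kv$ (this is where algebraicity of $Lv/Kv$ is really used), and contains $K^{r}v=(Kv)^{\mathrm{sep}}$. Hence it is algebraically closed.
\item A separable degree-$p$ extension of a separably tame field has ramification index $1$, hence residue degree $p$, which is impossible over $LK^{r}$.
\end{itemize}
Since $K^{\mathrm{sep}}/K^{r}$ is pro-$p$, it follows that $K^{\mathrm{sep}}=K^{r}$. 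This gives separable defectlessness of $K$. When $v|_{K}$ is nontrivial it also gives $p$-divisibility of $vK$ and perfectness of $Kv$, since otherwise a suitable Artin--Schreier or radical extension of $K$ would not lie in $K^{r}$. Your Steps 1 and 2 and your prime-to-$p$ argument then finish the proof, with the $p$-part of $vL/vK$ handled by $p$-divisibility of $vK$.

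Finally, you never return to the case $v|_{K}$ trivial, and it cannot be handled under the hypotheses as literally stated. Let $F=\FF_{p}(u)(\!(t)\!)$, $L=F^{\mathrm{sep}}$, $s=u^{p}-t$, and let $K$ be the relative algebraic closure of $\FF_{p}(s)$ in $L$. Then:
\begin{itemize}
\item the valuation is trivial on $K$;
\item $s^{1/p}=u-t^{1/p}\notin L$, so $L/K$ is regular;
\item $Lv$ is algebraic over $Kv\ni\bar{u}^{p}$;
\item yet $Kv\cong K$ is imperfect and $\bar{u}\notin Kv$.
\end{itemize}
So you need an extra hypothesis: $v|_{K}$ nontrivial, or $L$ perfect, or $Lv$ algebraic over the prime field as in Corollary~\ref{cor:going_down}. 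In the last two situations a trivially valued $K$ is perfect, and the conclusion follows at once from your Step 2.
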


\begin{corollary}\label{cor:going_down}
	Let $(L,v)$ be a separably tame valued field with algebraic residue field $Lv$
	(i.e.~$Lv$ is an algebraic extension of its prime subfield)
and let $K\subseteq L$ be any subfield with $L/K$ regular.
	Then $v$ induces on $K$ a valuation $v'$ such that $(K,v')$ is separably tame, $Lv=Kv'$, and $vL/v'K$ is torsion-free.
\end{corollary}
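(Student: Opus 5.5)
The plan is to reduce directly to Lemma~\ref{lem:going_down}, using $v'$ for the restriction of $v$ to $K$. This restriction is a valuation on $K$; it is nontrivial or trivial according to the case, and both are allowed. Then $(K,v')\subseteq(L,v)$ is an extension of valued fields, and $L/K$ is regular by hypothesis. So the only hypothesis of Lemma~\ref{lem:going_down} left to check is that $Lv/Kv'$ is algebraic.

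To check this, note that $Kv'$ embeds into $Lv$ via the residue map, and both residue fields have the same characteristic, namely the residue characteristic of $(L,v)$. Hence $Kv'$ contains the prime subfield $\mathbb{F}$ of $Lv$. The residue field $Lv$ is assumed algebraic over its prime subfield, so $Lv/\mathbb{F}$ is algebraic. Therefore the intermediate extension $Lv/Kv'$ is algebraic as well.

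Applying Lemma~\ref{lem:going_down} to $(K,v')\subseteq(L,v)$ then yields all three conclusions at once: $(K,v')$ is separably tame, $Lv=Kv'$, and $vL/v'K$ is torsion-free.

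I do not expect a real obstacle here. The only point needing care is to observe that the prime subfield of $Lv$ lies inside $Kv'$, which holds because residue maps preserve $0$ and $1$ and hence the prime field. In the perfect/tame setting the argument is identical, with ``separably tame'' replaced by ``tame''.
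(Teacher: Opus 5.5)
Your proof is correct and is exactly the intended derivation. The paper gives no written proof and treats the corollary as immediate from Lemma~\ref{lem:going_down}, via the one observation you make: $Kv'$ contains the prime subfield of $Lv$, so $Lv/Kv'$ is algebraic.
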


\begin{remark}
Before we can state the proposition, we recall several elementary definitions from computability theory:
for $A,B\subseteq\NN$,
$A$ is \define{Turing reducible} to $B$,
written $A\Tred B$,
if there is a computable function $f:\NN\rightarrow\NN$ such that $A=f^{-1}(B)$.
Of course then $A$ and $B$ are \define{Turing equivalent}, written
$A\Teq B$,
if
both
$A\Tred B$ and $B\Tred A$.
The \define{Turing sum}
$A\Toplus B$ is then a code for the disjoint union of $A$ and $B$,
for example we may take $A\Toplus B=2A\cup(2B+1)$.
For more details on these foundational concepts, we refer to
\cite[\S11]{Post} and \cite[Definition 1.6.8]{Soare}.
When discussing computability of theories, we will always be working in languages that are \define{presented},
that is, the set of symbols in each language $\Lang$ is assumed to be equipped with an injection $\Lang\rightarrow\NN$ as described in \cite[Appendix A]{AF-AE}.
By Gödel coding, this injection extends to the formulas of $\Lang$.
We say that a theory is computable if its image under the latter injection is,
and the Turing sum of theories is defined to be the Turing sum of their images.
\end{remark}

For convenience we write $\FF_{0}=\QQ$.
	This proposition should be compared with
	\cite[Theorems 1.1, 1.2]{Lis},

Given a field extension $K/C$, the \define{regular closure} of $C$ in $K$, is the smallest subfield $D\subseteq K$ that contains $C$ such that $K/D$ is regular.
This coincides with the relative algebraic closure in $K$ of the Lambda closure $\Lambda_{K}C$, as described in \cite{DM,A-Lambda}.
In both Proposition~\ref{prp:mono} and Theorem~\ref{thm:main}
we denote by $\FF_{p}(\underline{a})^{\reg}$ the regular closure of $\FF_{p}(\underline{a})$ in $K$, which is understood to be equipped with the restriction of $v$.

\begin{proposition}[{Main Proposition}]\label{prp:mono}
Let $\Lang=\Llambdaval(\Lk,\LG)$ be a $(\bbk,\bbG)$-expansion of $\Llambdaval$.
	Let $(K,v),(L,w)\models\STVF\cup\iota_{\bbk}\Th(k)$,
	with $\underline{a}\in K$ and $\underline{b}\in L$ both $n$-tuples such that the $i$-th element of $\underline{a}$ and of $\underline{b}$ are in the same sort, for each $i$.
	Suppose that
\begin{itemize}
\item[{\Alg}]
	$\FF_{p}(\underline{a})^{\reg}$ is separably defectless.
\end{itemize}
	Then
$
		\tp_{\lGal}^{K}(\underline{a})\subseteq\tp_{\lGal}^{L}(\underline{b})
		\implies
		\tp^{K}(\underline{a})=\tp^{L}(\underline{b})
$.
% Consequently 
% \begin{enumerate}[{\bf(i)}]
	% \item
		% there is a computable function
		% $\epsilon:\Form(\Lang)[x]\rightarrow\FlGal(\Lang)[x]$
		% such that
		% $$\STVF(\Th^{\Lk}(k),\emptyset)\models\forall x\;(\varphi(x)\leftrightarrow\epsilon\varphi(x)),$$
		% for every $\varphi(x)\in\Form(\Lang)[x]$.
% 	\item
% 		$\Th(K,v,\underline{a})\equiv\STVF_{\frakI}(\Th(k_{v}),\Th(\Gamma_{v}))\cup\Th_{\Epos(\Llambdaring)}(K,\underline{a})$,
% 		where $\frakI=\Impdeg(K)$;
% 		and
% 	\item
% 		$\Th(K,v,\underline{a})\Deq\Th(k_{v})\Doplus\Th(\Gamma_{v})\Doplus\Th_{\Epos(\Llambdaring)}(K,\underline{a})$.
%	\end{enumerate}
\end{proposition}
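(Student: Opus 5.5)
The plan is to build an isomorphism between the regular closures of $\FF_{p}(\underline{a})$ in $K$ and of $\FF_{p}(\underline{b})$ in $L$, compatible with the valuations, and then to invoke the subcompleteness principle, Theorem~\ref{thm:subcompleteness}. The hypothesis \Alg{} is exactly what that theorem needs of the common subfield.

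We first treat the main-sort part of the tuples. Write $C=\FF_{p}(\underline{a})^{\reg}\subseteq K$. Since $\tp_{\lGal}^{K}(\underline{a})\subseteq\tp_{\lGal}^{L}(\underline{b})$ includes the quantifier-free $\Llambdaring$-type, the map $\underline{a}\mapsto\underline{b}$ extends to an $\Llambdaring$-isomorphism $\Lambda_{K}\FF_{p}(\underline{a})\to\Lambda_{L}\FF_{p}(\underline{b})$.

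Next we extend this map to the relative algebraic closures. An element of $C$ is a root of a polynomial over $\Lambda_{K}\FF_{p}(\underline{a})$, and which such polynomials have roots (and which finite systems of them have common roots) is recorded by $\FEone(\Llambdaring)$-formulas in $\underline{a}$. Since $\FEone$ formulas pass from $\underline{a}$ to $\underline{b}$, we can use a compactness and König-style argument: the finitely generated subextensions have finitely many embeddings, so we get an embedding $\sigma\colon C\to L$ over the above map. Its image lies in the regular closure $D$ of $\FF_{p}(\underline{b})$ in $L$. I expect the inclusion of types (rather than equality) is then enough to make $\sigma\colon C\to D$ onto: one can run the same argument in reverse, or argue that the $\exists_{1}$-type controls which finite extensions are realised, and conclude by counting roots.

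The residue field and value group parts come next. By Corollary~\ref{cor:going_down} (residue field $\FF_{q}$ is algebraic), $(C,v)$ is separably tame with $Cv=Kv$, and likewise $(D,w)$ with $Dw=Lw$. Because $\FF_{q}$ is finite and $\mathcal{O}_{v}\cap C$ is defined by the $\Epos(\Lring)$-formula $\zeta_{q}$ of Theorem~\ref{thm:E1}, the map $\sigma$ preserves valuation rings. So $\sigma$ is an isomorphism of valued fields $(C,v)\cong(D,w)$; this is where finiteness of the residue field is crucial. Then $\sigma$ induces an isomorphism of residue fields and an embedding of value groups. The $\Fk$ and $\FG$ parts of the $\lGal$-type, which are preserved, give $Kv\equiv_{Cv}Lw$ and $vK\equiv_{vC}wL$ (with sort-$\bbk$/$\bbG$ entries of the tuples handled as parameters). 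Also $K/C$ and $L/D$ are regular, and the sentences in $X$ together with $\lambda$-type data give $\Impdeg(K/C)=\Impdeg(L/D)$.

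Finally, identify $C$ with $D$ via $\sigma$. Then \Alg{} makes $C$ separably defectless, so Theorem~\ref{thm:subcompleteness} yields $(K,v)\equiv_{C}(L,w)$, hence $\tp^{K}(\underline{a})=\tp^{L}(\underline{b})$, with residue/value-group sorted entries carried along.

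The main obstacle is the surjectivity of $\sigma$ onto $D$ and the transfer of the value group type relative to $vC$. For the latter, terms $\bbK\times\bbG\to\bbG$ such as $\bbval(c)$ for $c\in C$ must be expressible from $\underline{a}$. The first approach is to handle this by noting that every $c\in C$ is interdefinable with $\underline{a}$ via an $\exists_{1}$-formula, and substituting.
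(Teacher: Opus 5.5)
\textbf{The step that fails} is your claim that the $\Fk$-part of the type gives $Kv\equiv_{Cv}Lw$.

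The $\Fk$-formulas only concern residues of terms in $\underline{a}$ (and the $\bbk$-entries), that is, essentially $K_0v$. But $Cv=\FF_q$ may be strictly larger. The isomorphism $\bar\sigma\colon Cv\to Lw$ induced by your $\sigma$ depends on the choice of $\sigma$, and nothing in the $\lambda$-Galois type controls that choice.

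If $\Lk=\Lring$ and $\underline{a}$ lies in the main sort, this is harmless: every field automorphism of $\FF_q$ is elementary. In that case your argument covers what Theorems~\ref{introthm:main} and~\ref{thm:FqQ} need.

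In the stated generality, however, no argument can supply this step, because the proposition itself fails. Here is a counterexample:
\begin{itemize}
\item Take $K=L=\Hs{\FF_{p^2}}{\QQ}$ and let $\alpha$ generate $\FF_{p^2}^{\times}$.
\item Let $\Lk=\Lring\cup\{c\}$, with $c$ naming $\alpha$.
\item Put $a=\alpha^{-1}t$ and $b=\tau(a)=\alpha^{-p}t$, where $\tau(\sum_{\gamma}c_{\gamma}t^{\gamma})=\sum_{\gamma}c_{\gamma}^{p}t^{\gamma}$.
\item $\tau$ is a valuation-preserving field automorphism, and the residue of every $\Llambdaring$-term in $a$ lies in $\FF_p$. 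Hence $a$ and $b$ have the same $\lambda$-Galois type, and \Alg{} holds by Corollary~\ref{cor:going_down}.
\item The formula $\exists z\,\exists y\,(z^{p^{2}-1}=1\wedge\bbres(z)=c\wedge y^{p+1}=zx)$ holds of $a$: take $z=\alpha$ and $y=t^{1/(p+1)}$.
\item It fails of $b$, since it would force $\alpha^{1-p}\in(\FF_{p^2}^{\times})^{p+1}=\FF_p^{\times}$, whereas $\alpha^{1-p}$ has order $p+1$.
\end{itemize}
With $\Lk=\Lring$, the same happens for the mixed-sort tuples $(a,\alpha)$ and $(b,\alpha)$, using a residue-sort variable in place of $c$.

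The paper's proof hides the same obstruction. It obtains $Kv\equiv_{K_1v}Lw$ over the henselization $K_1$, where $K_1v=K_0v$. It then applies Theorem~\ref{thm:subcompleteness} over $K_1$, but $K$ is not regular over $K_1$. Passing to the regular closure, as you rightly do, is exactly where the residue field grows to $\FF_q$.

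\textbf{Comparison of routes.} Your route differs usefully from the paper's (henselization first, then extension to the relative algebraic closure). You deduce valuation-compatibility of the field embedding $\sigma\colon C\to L$ from the $\Epos(\Lring)$-formulas $\zeta_q,\epsilon_q$ of Theorem~\ref{thm:E1}. These apply in $C$ because $C$ is henselian with residue field $\FF_q$. This supplies a justification that the paper leaves implicit.

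\textbf{Two further steps need different arguments.}
\begin{itemize}
\item \emph{Surjectivity of $\sigma$ onto $D$.} It cannot come from running the argument in reverse, since you only have one inclusion of types. Nor can it come from counting roots with positive $\FEone$-data, which only show that $L$ realizes at least what $K$ does. In fact surjectivity is unnecessary, because Theorem~\ref{thm:subcompleteness} requires regularity only of $K/C$. It does nonetheless hold: $D/\sigma(C)$ is separable algebraic and immediate, while $\sigma(C)$ is henselian and separably defectless.
\item \emph{The claim $vK\equiv_{vC}wL$.} Substituting $\exists_1$-definitions of elements of $C$ into $\FG$-formulas leaves the fragment, and it does not distinguish an element from its conjugates. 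Instead, note that $vC/vK_0$ is torsion and $vK$ is torsion-free. So each $\gamma\in vC$ is the unique $\delta$ with $n\delta=\beta$ for some $\beta\in vK_0$. Likewise, the image of $\gamma$ under the induced map $\sigma_*$ on value groups is the unique $\delta'$ with $n\delta'=\sigma_*(\beta)$. Since $\FG$-formulas are closed under negation, the hypothesis then yields $vK\equiv_{vC}wL$. It also yields the equality of value groups used for immediacy above.
\end{itemize}
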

\begin{proof}
	Let $A$ be the $\Lang$-substructure of $(K,v)$ generated by $\underline{a}$,
	with underlying field $K_{0}$,
	and let $B$ be the $\Lang$-substructure of $(L,w)$ generateted by $\underline{b}$,
	with underlying field $L_{0}$.
	We keep in mind, however, that the sorts $\bbk$ and $\bbG$ of $A$ may be bigger than the image of $K_{0}$ under the residue and valuation maps, respectively; the same holds for $B$ and $L_{0}$.
	Let $v_{0}$ and $w_{0}$ denote the valuations on $K_{0}$ and $L_{0}$ induced by $v$ and $w$, respectively.
	The hypothesis of the implication implies
	that there is an isomorphism $\varphi_{0}:A\rightarrow B$ which in particular is an isomorphism of valued fields $(K_{0},v)\rightarrow(L,v)$ that maps $\underline{a}\mapsto\underline{b}$.
	We observe that $K/K_{0}$ and $L/L_{0}$ are separable because $A$ and $B$ are $\Lang$-substructures of $K$ and $L$, respectively, and $\Lang$ includes the parametrized lambda functions.
	By the universal property of henselizations, $\varphi_{0}$ extends to a map $\varphi_{1}:A_{1}\rightarrow B_{1}$ where $A_{1}$ is the henselization of $A$ (taken inside $K$), and $B_{1}$ is the henselization of $B$ (taken inside $L$).
	We must continue to keep in mind that $A_{1}$ is in fact the $\Lang$-substructure of $K$ which is generated by $\underline{a}$ and the henselization of $K_{0}$, and the same remark holds for $B_{1}$.
	The assumption on types implies that $\varphi$ may be extended to an embedding from the relative algebraic closure $A_{2}$ of $A_{1}$ in $K$ into the relative algebraic closure $B_{2}$ of $B_{1}$ in $L$,
	where both of these algebraic closures are to be understood in the field-theoretic sense, so that the sort $\bbK$ of $A_{2}$ is exactly the field-theoretic relative algebraic closure of the sort $\bbK$ of $A_{1}$ in $K$, and similarly for $B_{2}$.
	Next, we may identify $A_{1}$ with $B_{1}$ along $\varphi_{1}$, in particular identifying $\underline{a}$ with $\underline{b}$.
	Let $K_{1}$ be the sort $\bbK$ of $A_{1}$.
	The hypothesis also implies that
	$vK\equiv_{vK_{1}}wL$,
	$Kv\equiv_{K_{1}v}Lw$,
	and
	$\Impdeg(K/K_{1})=\Impdeg(L/K_{1})$.
	By the hypothesis \Alg, $(K_{1},v_{1})$ is separably defectless,
	and so by Theorem~\ref{thm:subcompleteness} we have $(K,v)\equiv_{K_{1}}(L,w)$,
	i.e.~$\tp^{K}(\underline{a})=\tp^{L}(\underline{b})$.
\end{proof}

\begin{theorem}[{Main Theorem}]\label{thm:main}
Let $\Lang=\Llambdaval(\Lk,\LG)$ be a $(\bbk,\bbG)$-expansion of $\Llambdaval$,
and let $k$ denote an $\Lk$-structure expanding a finite
field.
Complete $\Lang$-types in the theory $\STVF(\Th^{\Lk}(k),\emptyset)$ are determined by their $\lambda$-Galois fragment.
Consequently,
there is a computable function
$\epsilon_{k}:\Form(\Lang)[\underline{x},\underline{r},\underline{\alpha}]\rightarrow\FlGal(\Lang)[\underline{x},\underline{r},\underline{\alpha}]$
such that
	\begin{align*}
		\STVF(\Th^{\Lk}(k),\emptyset)\models\forall\underline{x}\;(\varphi(\underline{x},\underline{r},\underline{\alpha})\leftrightarrow\epsilon_{k}\varphi(\underline{x},\underline{r},\underline{\alpha}))
	\end{align*}
		for every $\varphi(\underline{x},\underline{r},\underline{\alpha})\in\Form(\Lang)[\underline{x},\underline{r},\underline{\alpha}]$.
\end{theorem}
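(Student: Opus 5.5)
The plan is to deduce the theorem from Proposition~\ref{prp:mono}, by checking that hypothesis \Alg{} holds automatically once the residue field is finite, and then to turn the statement about types into the computable elimination by the usual compactness argument.

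\textbf{Step 1: \Alg{} holds for every tuple.} Let $(K,v)\models\STVF(\Th^{\Lk}(k),\emptyset)$, so that $Kv$ is a finite field, in particular algebraic over $\FF_{p}$. Let $\underline{a}\in K$ and let $D=\FF_{p}(\underline{a})^{\reg}$ be the regular closure in $K$. By definition $K/D$ is regular. Corollary~\ref{cor:going_down} (via Lemma~\ref{lem:going_down}) then shows that $(D,v|_{D})$ is separably tame. In particular it is henselian and separably defectless, which is exactly \Alg.

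One point needs care. In the proof of Proposition~\ref{prp:mono}, separable defectlessness is required of the field $K_{1}$, the henselization of the field generated by the structure $A$. That field may be generated by more than $\underline{a}$ on the field sort, because of the $\lambda$-functions. To handle this, I would check that the regular closure of $\FF_{p}(\underline{a})$ already contains $\Lambda_{K}\FF_{p}(\underline{a})$, and hence contains $K_{0}$ and its henselization $K_{1}$ inside $K$. The tuple may also contain entries from the sorts $\bbk$ and $\bbG$; only the field-sort entries matter for \Alg.

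Next I would apply the going-down argument to $K_{1}$ itself where this is needed. The cleaner route is to replace $K_{1}$ by the relative algebraic closure $A_{2}$, which equals $D$, in the final application of Theorem~\ref{thm:subcompleteness}. This is legitimate because $K/D$ is regular and $D$ is separably defectless, and because the embedding $A_{2}\to B_{2}$ exists by the type hypothesis. The residue and value group conditions over $D$ then follow from Lemma~\ref{lem:going_down}. It gives $Dv=Kv$, and the $\Fk$ and $\FG$ parts of the type control $k_{v}$ and $\Gamma_{v}$ over the images. I expect this bookkeeping to be the main obstacle: making sure that Theorem~\ref{thm:subcompleteness} is applied over a subfield that is genuinely separably defectless and that the hypotheses transfer.

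\textbf{Step 2: types are determined by the $\lambda$-Galois fragment.} With Step 1 in hand, Proposition~\ref{prp:mono} gives
\[
\tp_{\lGal}^{K}(\underline{a})\subseteq\tp_{\lGal}^{L}(\underline{b})\implies\tp^{K}(\underline{a})=\tp^{L}(\underline{b})
\]
for all models $K,L$ of $\STVF(\Th^{\Lk}(k),\emptyset)$. Since $\FlGal(\Lang)$ contains $\bot$ and is closed under negation up to equivalence (or by applying the implication in both directions), equality of the fragment types forces equality of complete types.

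\textbf{Step 3: computable elimination.} Let $T=\STVF(\Th^{\Lk}(k),\emptyset)$. The standard separation argument applies to a fragment closed under $\wedge$ and $\vee$ in which inclusion of fragment-types implies equality of full types. It shows that each $\varphi$ is $T$-equivalent to a finite disjunction of finite conjunctions of fragment formulas, that is, to a formula $\psi\in\FlGal(\Lang)$. For computability, $T$ is computably enumerable relative to the finite structure $k$: $\Th^{\Lk}(k)$ is decidable when $k$ is finite and $\Lk$ is presented, and $\STVF$ is recursively axiomatized. So one can search through all pairs $(\psi,\text{proof})$ until a proof of $\forall\underline{x}(\varphi\leftrightarrow\psi)$ from $T$ is found, and set $\epsilon_{k}\varphi=\psi$. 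The search terminates because Step 2 guarantees that such a $\psi$ exists.
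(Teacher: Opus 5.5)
Your route is the paper's: \Alg{} from Corollary~\ref{cor:going_down}, then Proposition~\ref{prp:mono}, then the separation lemma and a proof search. You are right to move from the henselization $K_{1}$ to $A_{2}$, whose field sort is $D=\FF_{p}(\underline{a})^{\reg}$, because $K/K_{1}$ need not be regular. The gap is the claim that ``the $\Fk$ and $\FG$ parts of the type control $k_{v}$ and $\Gamma_{v}$ over the images.'' For $\Gamma_{v}$ this is fine, since $vD$ is the relative divisible hull of $vK_{0}$. For the residue field it fails. Over $D$ you need $Kv\equiv_{Dv}Lw$ in $\Lk$, compatibly with the $\bbk$-sort of $A$. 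However, $Dv$ can be all of $\FF_{q}$ while $K_{0}v=\FF_{p}$. The embedding of $D$ given by the $\FEone(\Llambdaring)$-type is fixed only up to a Galois twist. No formula of $\FlGal(\Lang)$ links roots of polynomials over $K_{0}$ to residue-sort data (constants of $\Lk$, or the parameters $\underline{r}$). The paper's proof of Proposition~\ref{prp:mono} has the same hole: it asserts the embedding $A_{2}\to B_{2}$ without residue compatibility, and then applies Theorem~\ref{thm:subcompleteness} over $K_{1}$.

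This step cannot be patched, because the statement fails as written. Let $\theta\in\FF_{p^{2}}\setminus\FF_{p}$, $\Lk=\Lring\cup\{c\}$, $k=(\FF_{p^{2}},\theta)$, and $K=\Hs{\FF_{p^{2}}}{\QQ}$ with $c\mapsto\theta$. Put $a=\theta t$ and $a'=\theta^{p}t$.
\begin{itemize}
\item Frobenius on coefficients is a valued-field automorphism of $K$ sending $a\mapsto a'$, so $a$ and $a'$ agree on $\FEone(\Llambdaring)$, $\FG$ and $X$.
\item Every $\Llambdaring$-term in $a$ lies in $\FF_{p}[a^{1/p^{\infty}}]\subseteq\FF_{p}+\mathfrak{m}_{v}$, so every $\bbk$-term takes the same value at $a$ and $a'$; they therefore agree on $\Fk$ too.
\end{itemize}
Hence $\tp_{\lGal}(a)=\tp_{\lGal}(a')$. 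But consider
\[\exists y\,\exists w\,\bigl(y^{p^{2}}=y\wedge\bbres(y)=c\wedge x=y\,w^{p^{2}-1}\bigr).\]
It holds of $a$: take $y=\theta$ and $w=t^{1/(p^{2}-1)}$. It fails of $a'$: $y=\theta$ is forced, and $\theta^{p-1}t$ is not a $(p^{2}-1)$-th power, since such powers have leading coefficient $1$. In your terms, any $K_{0}$-embedding of $D$ with $a\mapsto a'$ must send $\theta\mapsto\theta^{p}$. Otherwise $t\mapsto\theta^{p-1}t$, and then $t^{1/(p^{2}-1)}\in D$ has no image. So the embedding induces Frobenius on $Dv=\FF_{p^{2}}$, which moves $c$. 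With $\Lk=\Lring$, the same example works using a residue-sort variable $r=\theta$ instead of $c$.

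Your argument does go through under two restrictions: $\Lk=\Lring$ (or any $\Lk$ for which every field automorphism of $k$ is an $\Lk$-automorphism), and free variables only in $\bbK$ and $\bbG$. Then $Dv=Kv$ is the whole finite field, so the induced map $Dv\to Lw$ is a field isomorphism and hence elementary. This case is all that Theorems~\ref{introthm:main} and~\ref{thm:FqQ} use. In general the fragment must be enlarged by mixed formulas such as $\exists y\,(f(\underline{x},y)=0\wedge\rho(\bbres(g(\underline{x},y)),\underline{r}))$.
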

\begin{proof}
	The hypothesis \Alg\ of Propositionn~\ref{prp:mono} holds by Corollary~\ref{cor:going_down}.
	The existence of $\epsilon_{k}$ then follows from the conclusion of Proposition~\ref{prp:mono} by a standard argument using the separation lemma,
	and $\epsilon_{k}$ is computable by enumerating proofs from $\STVF(\Th^{\Lk}(k),\emptyset)$.
	For a detailed exposition,
	see \cite{AF-fragments}.
\end{proof}

We denote by $\TVF$ the $\Lval$-reduct of $\STVF_{0}$, which is exactly the theory of tame valued fields, i.e.~the theory of perfect separably tame valued fields.

\begin{theorem}[{Main Theorem---perfect case}]\label{thm:main_perfect}
Let $\Lang=\Lval(\Lk,\LG)$ be a $(\bbk,\bbG)$-expansion of $\Lval$,
and let $k$ denote an $\Lk$-structure expanding a finite
field.
Complete $\Lang$-types in the theory $\TVF(\Th^{\Lk}(k),\emptyset)$ are determined by their Galois fragment.
Consequently,
there is a computable function
$\epsilon_{k}:\Form(\Lang)[\underline{x},\underline{r},\underline{\alpha}]\rightarrow\FGal(\Lang)[\underline{x},\underline{r},\underline{\alpha}]$
such that
	\begin{align*}
		\TVF(\Th^{\Lk}(k),\emptyset)\models\forall\underline{x}\;(\varphi(\underline{x},\underline{r},\underline{\alpha})\leftrightarrow\epsilon_{k}\varphi(\underline{x},\underline{r},\underline{\alpha}))
	\end{align*}
		for every $\varphi(\underline{x},\underline{r},\underline{\alpha})\in\Form(\Lang)[\underline{x},\underline{r},\underline{\alpha}]$.
\end{theorem}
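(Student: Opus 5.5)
We deduce Theorem~\ref{thm:main_perfect} from Theorem~\ref{thm:main} by specialising to imperfection degree $0$ and passing to reducts, rather than re-running the whole argument. The alternative is to redo Proposition~\ref{prp:mono} in the perfect setting. Both routes share the same key observation: in a perfect field the lambda functions add nothing.

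First, let $(K,v)\models\TVF(\Th^{\Lk}(k),\emptyset)$. Then $K$ is perfect, so its expansion to a model of $T_{\lambda}$ is an $\Llambdaval(\Lk,\LG)$-structure modelling $\STVF_{0}(\Th^{\Lk}(k),\emptyset)$. In this theory every parametrized lambda function is $\emptyset$-definable by a quantifier-free $\Lring$-formula, because the $p$-basis is empty. Concretely, for $n=0$ the only lambda map is the $p$-th root, and all others vanish or are undefined. So each $\lambda$-term $t(\underline{x},y)$ is equivalent, modulo $T_{\lambda}\cup\{\text{perfect}\}$, to a term built from $\Lring$-operations and the inverse Frobenius.

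The next step is to check that $t(\underline{x},y)=0$ is equivalent to $\exists z\colon g(\underline{x},z)=0$ for some polynomial $g\in\ZZ[\underline{x},z]$. The reason is that $x\mapsto x^{p}$ is a bijection, so $p$-th roots can be cleared by raising both sides of the equation to a $p$-power, working inside-out.

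1. An existential quantifier over $y$ with $t(\underline{x},y)=0$ is first replaced by a quantifier over $y'=y^{1/p^{m}}$.
2. After raising the equation to a suitable $p$-power, all roots of $\underline{x}$ become integral powers of $x_i^{1/p^{m}}$.
3. The remaining roots of the free variables $x_i$ cannot be absorbed by substituting into $y$. I handle this by noting that $x_i^{1/p^m}$ is a polynomial in $x_i$ modulo perfect fields only via the Frobenius inverse, which is not polynomial. So instead I raise the whole equation to the $p^{m}$-th power: $(\sum c_{j}x^{j/p^m}y'^{e})^{p^m}=\sum c_j^{p^m}x^{j}y'^{ep^m}$ in characteristic $p$, with $c_j\in\FF_p$.

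This is the main routine obstacle, and Frobenius additivity handles it. The upshot is that $\FEone(\Llambdaring)$ collapses onto $\FEone(\Lring)$ modulo perfect fields, and the sentences in $X$ axiomatising imperfection degrees become $\top$ or $\bot$. Hence every formula in $\FlGal(\Llambdaval(\Lk,\LG))$ is computably equivalent, modulo $\STVF_{0}$, to one in $\FGal(\Lval(\Lk,\LG))$.

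Now let $\varphi$ be an $\Lval(\Lk,\LG)$-formula.

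1. Apply $\epsilon_{k}$ from Theorem~\ref{thm:main}, viewing $\varphi$ as an $\Llambdaval(\Lk,\LG)$-formula.
2. Translate the output into $\FGal$ as above.

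The result is equivalent to $\varphi$ in every model of $\STVF(\Th^{\Lk}(k),\emptyset)$ of imperfection degree $0$, hence in every model of $\TVF(\Th^{\Lk}(k),\emptyset)$ after taking reducts. Both steps are computable, so the composite map is computable.

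The statement that complete types are determined by their Galois fragment follows by the same translation. Two tuples realising the same Galois-fragment type realise the same $\lambda$-Galois-fragment type in the lambda expansions, so by Theorem~\ref{thm:main} they have the same complete type. Alternatively, one can run the proof of Proposition~\ref{prp:mono} directly with $\Lang$-substructures, using Corollary~\ref{cor:going_down} to verify \Alg, Theorem~\ref{thm:subcompleteness} with $\frakI=0$, and trivially matching imperfection degrees.
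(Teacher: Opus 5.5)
Your route is sound but genuinely different from the paper's. The paper proves Theorem~\ref{thm:main_perfect} by rerunning the argument of Proposition~\ref{prp:mono} and Theorem~\ref{thm:main} directly in $\Lval(\Lk,\LG)$, with $\TVF$ in place of $\STVF$. There \Alg\ comes from Corollary~\ref{cor:going_down}, and only the case $\frakI=0$ of Theorem~\ref{thm:subcompleteness} is needed, i.e.\ Kuhlmann's theorem for tame fields. That is the alternative you sketch in your last sentence.

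You instead derive the perfect case formally from Theorem~\ref{thm:main}, through a syntactic collapsing lemma: modulo perfectness, $\FlGal(\Llambdaval(\Lk,\LG))$ reduces computably to $\FGal(\Lval(\Lk,\LG))$. This avoids re-checking the embedding argument and yields a small lemma of independent interest. The cost is that the perfect case then depends on the full imperfect theorem, hence on \cite{KuhlmannPal,A-Lambda}, which the direct route does not need.

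As written, your collapsing lemma is only checked on the generators coming from $\FEone(\Llambdaring)$ and from $X$. Here are the points still to be filled in:
\begin{enumerate}[{\bf(i)}]
\item \emph{$\lambda$-terms inside $\Fk$ and $\FG$.} The generators in $\Fk$ and $\FG$ of the $\lambda$-language may contain $\lambda$-terms inside $\bbres$ and $\bbval$, e.g.\ $\rho(\bbres(t))$ or $\gamma(\bbval(t))$ with $t$ involving $p$-th roots. ``Raising the equation to a $p^{m}$-th power'' does not apply there, since $\rho$ and $\gamma$ are arbitrary formulas. These cases are easily handled. Let $s$ be the $\Lring$-term equal to $t^{p^{m}}$ modulo perfect fields of characteristic $p$.
\begin{itemize}
\item Since the residue field satisfies $z^{q}=z$, you have $\bbres(t)=\bbres(t^{q^{m}})=\bbres(s^{p^{m(\ell-1)}})$, which is an $\Lval$-term.
\item $\bbval(t)$ is the unique $\delta$ with $p^{m}\delta=\bbval(s)$ in the torsion-free value group. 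So $\gamma(\bbval(t))$ becomes the $\LG$-formula $\exists\delta\,(p^{m}\delta=w\wedge\gamma(\delta))$ evaluated at $w=\bbval(s)$.
\end{itemize}
\item \emph{Characteristic $0$.} Your Frobenius computation presupposes characteristic $p$. Separable tameness is defined via the residue characteristic exponent, so $\TVF(\Th^{\Lk}(k),\emptyset)$ may have models of characteristic $0$, where the $\lambda$-symbols do not compute $p$-th roots. These models need a separate (easy) translation, glued in using the sentences $p=0$ and $\exists y\,py=1$, both of which lie in $\Epos(\Lring)$.
\item \emph{Simplification of steps 1--3.} These can be collapsed: $t^{p^{m}}$ is already the $\Lring$-term $s$, and $t=0\leftrightarrow s=0$ by injectivity of Frobenius. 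The substitution $y'=y^{1/p^{m}}$ is therefore unnecessary.
\end{enumerate}
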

\begin{proof}
	The proof of Theorem~\ref{thm:main} works here, simply changing $\STVF$ for $\TVF$.
\end{proof}

\begin{proof}[Proof of Theorem~\ref{introthm:main}]
We apply Theorem~\ref{thm:main_perfect} to the base language $\Lang=\Lval$.
In this case the language on the residue field sort is simply $\Lk=\Lring$,
and then $\Th(k)$ of course admits a computable quantifier-elimination,
since $k$ is a finite field.
\end{proof}

\begin{theorem}\label{thm:FqQ}
For every prime power $q=p^{\ell}$.
There is a computable function $\epsilon:\Form(\Lval)\rightarrow\Epos(\Lring)$ such that
for every $\Lval$-formula $\varphi(\underline{x})$ with free variables from the sort $\bbK$
we have
\begin{align*}
	(\Hs{\FF_{q}}{\QQ},v_{t})\models\forall\underline{x}\;(\varphi(\underline{x})\leftrightarrow\epsilon\varphi(\underline{x})).
\end{align*}
\end{theorem}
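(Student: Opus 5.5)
The plan is to start from Theorem~\ref{thm:main_perfect} with $\Lang=\Lval$ and $k=\FF_{q}$, and then simplify the resulting Galois-fragment formula using the specific structure of $(\Hs{\FF_{q}}{\QQ},v_{t})$. First we check that this field is a model of $\TVF(\Th(\FF_{q}),\emptyset)$. It is henselian and maximal, hence defectless. Its value group $\QQ$ is divisible, so in particular $p$-divisible, and its residue field $\FF_{q}$ is perfect.

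So, given $\varphi(\underline{x})$ with free variables only in $\bbK$, Theorem~\ref{thm:main_perfect} computably produces an equivalent formula in $\FGal(\Lval)[\underline{x}]$. This is a positive Boolean combination of three kinds of formulas:
\begin{itemize}
\item formulas $\rho(\underline{s}(\underline{x}))$ from $\Fk$;
\item formulas $\gamma(\underline{t}(\underline{x}))$ from $\FG$;
\item formulas from $\FEone(\Lring)$.
\end{itemize}
Since the fragment is closed under $\wedge,\vee$ and contains negations of its generators up to equivalence, we first put it in disjunctive normal form.

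Next we eliminate the $\bbk$- and $\bbG$-parts. Terms $\bbK\to\bbk$ with no free $\bbk$-variables are of the form $\bbres(g(\underline{x}))$, possibly combined with ring operations and constants in $\bbk$; these reduce to finitely many residues $\bbres(g_{j}(\underline{x}))$ with $g_{j}\in\ZZ[\underline{x}]$. Since $\Th(\FF_{q})$ has computable quantifier elimination with all elements named, and $\FF_{q}$ is finite, each $\rho$ becomes a finite Boolean combination of statements $\bbres(g(\underline{x}))=c$ with $c\in\FF_{q}$. Because residue elements have Teichm\"uller lifts in $\FF_{q}\subseteq K$, and these are roots of $\wp_{q}$, the statement $\bbres(g)=c$ is equivalent to $\exists z\,(\wp_{q}(z)=0\wedge g-z\in\mathfrak{m}_{v}\wedge \text{``$z$ is the $c$-root''})$. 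More simply, we can express it as $v(g-c')>0$ for a lift $c'$. The trouble is that $c'$ is not $\ZZ$-definable, so we instead take the disjunction over all roots and use conjugacy symmetry.

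Similarly, terms $\bbK\to\bbG$ are $\ZZ$-linear combinations of values $v(g_{j}(\underline{x}))$. Since $\Th(\QQ,<)$ as a divisible ordered abelian group has quantifier elimination, each $\gamma$ becomes a Boolean combination of comparisons $\sum m_{j}v(g_{j})\geq 0$ or $=0$ (also handling $v(0)=\infty$). Each comparison, after clearing negative exponents, is equivalent to $v(h_{1})\geq v(h_{2})$ for polynomials $h_{1},h_{2}$. That in turn is $h_{2}=0\to h_{1}=0$ combined with $h_{1}/h_{2}\in\mathcal{O}_{v}$, which we express as $\exists z\,(h_{2}z=h_{1}\wedge\zeta_{q}(z))$ using Theorem~\ref{thm:E1}, and with $\epsilon_{q}$ for strict inequalities.

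It remains to handle negations and the remaining Boolean structure. We must show that the negations of atomic statements, namely $h\neq0$, $v(h_{1})<v(h_{2})$ and $\bbres(g)\neq c$, are also $\FEpos(\Lring)$-expressible. For $h\neq 0$ we use $\exists z\, hz=1$. The valuation comparisons are closed under negation by swapping roles. Residue conditions become disjunctions over the other values. After this everything lies in $\FEpos(\Lring)$, and since $\Hs{\FF_{q}}{\QQ}$ contains no root of a suitable polynomial (for instance an irreducible polynomial over $\FF_{p}$ of degree coprime to $\ell$ and greater than one, which has no root because $\FF_{p}$ is relatively algebraically closed in $K$ up to $\FF_{q}$), Remark~\ref{rem:basic} computably collapses the result to a single $\Epos(\Lring)$-formula.

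The main obstacle is the residue-field step: naming specific elements $c\in\FF_{q}\setminus\FF_{p}$ without parameters. I would handle this by quantifying over a generator $\theta$ of $\FF_{q}$ via its minimal polynomial, writing $c$ as a polynomial in $\theta$. Because the whole Galois-fragment formula is $\Gal(\FF_{q}/\FF_{p})$-invariant, we can replace the residue part by $\exists\theta\,(\mu(\theta)=0\wedge\ldots)$ with $\theta$ ranging over all conjugates consistently. Invariance of the original formula under the Frobenius automorphism of $K$ fixing $t$ should make this sound, but it needs care.
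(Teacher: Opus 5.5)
Your overall architecture is the paper's. You apply Theorem~\ref{thm:main_perfect} with $\Lang=\Lval$ and $k=\FF_q$. You remove the $\bbG$-part by quantifier elimination for divisible ordered abelian groups and the $\bbk$-part by quantifier elimination for $\Th(\FF_q)$. You then turn the quantifier-free $\Lval$-conditions into $\FEpos(\Lring)$ via Theorem~\ref{thm:E1}, and collapse using Remark~\ref{rem:basic}.

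The genuine gap is the residue-field step, which you leave open (``needs care''). The obstacle there is self-inflicted: you never need to name elements of $\FF_q\setminus\FF_p$, because $\Th(\FF_q)$ already has computable quantifier elimination in the pure language $\Lring$. The argument is as follows. The quantifier-free type of $\underline{c}\in\FF_q^m$ determines $(\FF_p(\underline{c}),\underline{c})$ up to isomorphism. Any isomorphism between subfields of $\FF_q$ extends to a power of Frobenius on $\FF_q$. So a parameter-free $\rho(\underline{y})$ defines a union of $\Gal(\FF_q/\FF_p)$-orbits. Each orbit is the zero set in $\FF_q^m$ of the maximal, finitely generated ideal of $\ZZ[\underline{y}]$ vanishing at $\underline{c}$. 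Hence, after splitting according to which $g_i(\underline{x})$ lie in $\mathcal{O}_v$, the condition $\rho(\bbres(g_1(\underline{x})),\ldots,\bbres(g_m(\underline{x})))$ becomes a positive combination of three kinds of conditions: $g_i\in\mathcal{O}_v$, $g_i\notin\mathcal{O}_v$, and $h(\underline{g})\in\mathfrak{m}_v$ with $h\in\ZZ[\underline{y}]$. All of these are expressible through $\zeta_q$ and $\epsilon_q$. This is exactly what the paper's one-line appeal to quantifier elimination for the finite field $k$ provides.

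Your $\theta$-device is unnecessary, and as written it also leaves $\FEpos(\Lring)$. The formula $\exists\theta\,(\mu(\theta)=0\wedge\cdots)$ adds a second existential quantifier, so Remark~\ref{rem:basic} no longer applies without a further argument, for instance a norm over the conjugates $\theta_0^{p^i}$. Moreover, its soundness rests on the $\Gal(\FF_q/\FF_p)$-invariance of $\rho(\FF_q)$, not on automorphisms of $K$.

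The same two-quantifier problem affects your $\exists z\,(h_2z=h_1\wedge\zeta_q(z))$. Instead, substitute $z=h_1/h_2$ into $\zeta_q$ and clear denominators, which gives $\exists y\colon f(y)f(0)(h_1^q-h_1h_2^{q-1})=(f(0)-f(y))h_2^q$, with $f$ as in the proof of Theorem~\ref{thm:E1}. This remains correct when $h_2=0$, since $f$ has no root.

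Finally, a caveat that applies equally to the paper's proof. The conjunction clause of Remark~\ref{rem:basic} uses a single witness $z$ for both $g_1$ and $g_2$, so as written it expresses a common root; take $g_1=y-x_1$ and $g_2=y-x_2$ to see the difference. Collapsing a conjunction of $\Epos(\Lring)$-formulas into a single one therefore needs a separate justification.
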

\begin{proof}
As in the proof of Theorem~\ref{introthm:main}, we apply Theorem~\ref{thm:main} to the language $\Lang=\Lval$ and to the residue field $k$.
However, in this case the theory of the value group is the theory of divisible ordered abelian groups, which admits a computable quantifier elimination.
This yields a computable elimination down to the fragment generated by $\FEpos(\Lring)$ and the quantifier-free formulas in $\Lval$.
The latter are computably equivalent modulo $\H(\Th(\FF_{q}))$ to $\FEpos(\Lring)$-formulas by Theorem~\ref{thm:E1}.
We observe that $\wp_{q}-1$ is a polynomial over $\ZZ$ of degree $p$ with no root in any model of $\H(\Th(\FF_{q}))$.
Therefore, by Remark~\ref{rem:basic} applied to $f=\wp_{q}-1$, there is a computable elimination $\FEpos(\Lring)\rightarrow\Epos(\Lring)$ modulo $\H(\Th(\FF_{q}))$ as required.
\end{proof}

Theorem~\ref{introthm:FqQ} is an immediate corollary of Theorem~\ref{thm:FqQ}.
Theorem~\ref{thm:main} also yields the following corollary, which is simply a resplendent version of Lisinski's Theorem (Theorem~\ref{thm:Lisinski}), in the setting of finite residue fields.

\begin{corollary}\label{cor:Lisinski_style}
Let $\Lang=\Lval(\Lk,\LG)$ be a $(\bbk,\bbG)$-expansion of $\Lval$,
and let $k$ denote a presented $\Lk$-structure expanding a finite field.
	For every $(K,v)\models\TVF(\Th^{\Lk}(k),\emptyset)$,
	and $\pi\in\mathfrak{m}_{v}\setminus\{0\}$,
	we have the \Dtext\ equivalence
	$$\Th(K,v,\pi)\Deq\Th(vK,v(\pi))\Doplus\Th_{\Epos(\Lring)}(K,\pi).$$
\end{corollary}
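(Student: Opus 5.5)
We prove the two Turing reductions separately. Throughout, $(K,v)\models\TVF(\Th^{\Lk}(k),\emptyset)$ and $\pi\in\mathfrak{m}_{v}\setminus\{0\}$. By adding a constant symbol for $\pi$ (a parameter of the sort $\bbK$), Theorem~\ref{thm:main_perfect} gives a computable map $\epsilon_{k}$ sending any $\Lang$-sentence $\varphi(\pi)$ to an equivalent formula $\epsilon_{k}\varphi(\pi)$ in $\FGal(\Lang)$ with the single free variable instantiated by $\pi$. The equivalence holds modulo $\TVF(\Th^{\Lk}(k),\emptyset)$.

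For the reduction $\Th(K,v,\pi)\Tred\Th(vK,v(\pi))\Toplus\Th_{\Epos(\Lring)}(K,\pi)$, proceed as follows.

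First, $\epsilon_{k}\varphi(\pi)$ is a finite positive Boolean combination (conjunctions and disjunctions) of formulas of three kinds. Being a fragment element, it may also involve $\top$ and $\bot$, but these are harmless.

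1. Residue formulas $\rho(\underline{s}(\pi))$. Since there are no free variables in $\bbk$, the terms $\underline{s}$ are terms in $\pi$ mapped by $\bbres$ into $\bbk$. Because $k$ is presented and $\Th^{\Lk}(k)$ is imposed, we decide these by computing. We need $\Th^{\Lk}(k)$ to be decidable given presentation; more carefully, we observe that $\bbres(t(\pi))$ for a field term $t$ is the residue of a polynomial in $\pi$ over $\ZZ$. Since $v(\pi)>0$, this residue is computable as an element of the prime field, namely the constant coefficient mod $p$, or undefined-as-$0$ if $v(t(\pi))<0$. That case is itself decided by the value-group part. Then $\rho$ evaluated on explicit prime-field elements is a sentence of $\Th^{\Lk}(k)$, which is fixed. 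This step needs $\Th^{\Lk}(k)$ to be computable. I expect the intended reading is that "presented" $k$ makes its theory computable; otherwise we add it as an oracle, though the statement omits it. I expect this to be the main subtlety.

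2. Value-group formulas $\gamma(\underline{t}(\pi))$. Each $\bbval(g(\pi))$ for a polynomial $g\in\ZZ[\pi]$ equals $m\,v(\pi)$, where $m$ is the $\pi$-adic order of $g$ with coefficients read mod $p$. Here we use $\ch K=p$, which holds since $\ch Kv=p$ and $\pi$ is transcendental or algebraic over $\FF_{p}$. If all coefficients vanish mod $p$ the value is $\infty$. So each $\gamma(\underline{t}(\pi))$ computably becomes a sentence of $\Th(vK,v(\pi))$ in the constant $v(\pi)$. This requires care if $\pi$ is algebraic over $\FF_p$, but then $\pi$ would be a root of unity or zero, contradicting $v(\pi)>0$.

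3. $\FEpos(\Lring)$-formulas in $\pi$. Apply Remark~\ref{rem:basic} with $f=\wp_{q}-1$, which has no root in $K$, to collapse them to a single $\Epos(\Lring)$-sentence in $\pi$. Query $\Th_{\Epos(\Lring)}(K,\pi)$.

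Evaluating the positive Boolean combination from the answers yields the reduction. A cleaner alternative is to Turing-reduce via one many-one style query per atom. Turing reducibility in the paper's sense allows a computable function, and we can combine the queries using the Turing sum with finitely many queries, folded into a single one where necessary.

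For the converse, $\Th(vK,v(\pi))\Tred\Th(K,v,\pi)$ via the interpretation $\iG$, substituting $\bbval(\pi)$ for the constant. Likewise $\Th_{\Epos(\Lring)}(K,\pi)\subseteq\Th(K,v,\pi)$ trivially, so the Turing sum reduces.
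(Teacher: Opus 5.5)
Your route is the paper's. Its proof is only the observation that the valued subfield generated by $\pi$ is $\FF_{p}(\pi)$ and that the pointed value group is $(vK,v(\pi))$. After Theorem~\ref{thm:main_perfect} (applied with $x:=\pi$), the residue atoms are decided by $\Th^{\Lk}(k)$, the value-group atoms become $\LG$-sentences about $(vK,v(\pi))$, and what remains are existential sentences in $\pi$. Your converse direction is fine.

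You are right that decidability of $\Th^{\Lk}(k)$ is needed. It is in fact needed already for $\epsilon_{k}$ to be computable, since the proof of the Main Theorem enumerates consequences of $\TVF(\Th^{\Lk}(k),\emptyset)$; so ``presented'' has to be read as including it. Also, the case $v(t(\pi))<0$ in your step 1 never occurs: $\bbK$-terms are integer polynomials in $\pi\in\mathfrak{m}_{v}$.

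Three steps need repair.

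First, your derivation of $\ch K=p$ is invalid. $\ch Kv=p$ does not force $\ch K=p$: the class of tame fields has mixed-characteristic members, as the introduction notes. Your appeal to $\pi$ being algebraic or transcendental over $\FF_{p}$ presupposes the conclusion. The paper tacitly works in equal characteristic (its proof writes $\FF_{p}(\pi)$), and you should adopt that as a standing hypothesis. Without it your formula $v(g(\pi))=m\,v(\pi)$ breaks down; for example, $v(p-\pi^{3})$ depends on $v(p)$.

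Second, do not collapse existential atoms via Remark~\ref{rem:basic}. Its conjunction clause uses a single witness $z$ for both equations, so $\exists z\,h(\underline{x},z)=0$ expresses $\exists z\,(g_{1}(\underline{x},z)=0\wedge g_{2}(\underline{x},z)=0)$. This is strictly stronger than $\exists y_{1}\,g_{1}(\underline{x},y_{1})=0\wedge\exists y_{2}\,g_{2}(\underline{x},y_{2})=0$ (take $g_{1}=y$ and $g_{2}=y-1$). For sentences this costs nothing: push the positive Boolean combination down to individual $\Epos(\Lring)$-atoms and query each one separately.

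Third, that evaluation uses several oracle queries, so what you establish is Turing reducibility in the usual oracle-machine sense. The paper's displayed definition ($A=f^{-1}(B)$) is literally many-one. Your remark that the queries can be ``folded into a single one'' is unjustified: a conjunction of a value-group sentence with an $\Epos(\Lring)$-sentence has no evident single-query translation. Read $\Tred$ as ordinary Turing reducibility and drop that remark.
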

\begin{proof}
	Observe that the valued subfield generated by $\pi$ is $\FF_{p}(\pi)$,
	and the value group with constants is $(vK,v(\pi))$.
\end{proof}

% \begin{remark}
% This corollary should admit a version for ``classical fragments'':
% if $F$ is a sequence of $\exists$ and $\forall$ then
% 	\SA{Carry on here}
% \end{remark}

\begin{question}
Can Theorem~\ref{thm:main} (or perhaps a weaker statement) be proved uniformly for the theory of finite residue fields?
Can this be proved for other theories of residue fields?
What about for the theory of prime fields?
\end{question}

\section{The additive fragment is NTP2}
\label{section:NTP2}

We now circle back to the question that has haunted us for 6 years:
\begin{question}
Is $\Hs{\FF_{q}}{\QQ}$ NTP2?
\end{question}

This question fits into the more general framework of classification/neo-stability, which explores links between model-theoretic dividing lines and algebraic properties of pure fields, valued fields, or other structures such as ordered fields or difference fields.
In chronological order, \cite{HHJ,AJ,B-chips} have established classifications of strongly dependent, NIP, and finally NIP$_{\!n}$ henselian valued fields, down to their residue field.

Our friend $\Hs{\FF_{q}}{\QQ}$ has IP$_{\!n}$, as it has one Artin-Schreier extension, see \cite{KSW,Hempel}. Thus, it is out of the scope of the existing classifications, as are most tame henselian valued fields. We believe that the correct model-theoretic dividing line to study those fields is NTP2, introduced by Shelah in \cite{Sh-NTP2}, which encompasses both Simple and NIP theories, and is defined as follows:

\begin{definition}
	Let $\Lang$ be a language, $T$ a complete $\Lang$-theory and $M\vDash T$ be $\aleph_1$-saturated. An $\Lang$-formula $\varphi(\underline x,\underline y)$ has the \define{tree property of the second kind} (in $T$), abbreviated TP2, if there is a natural number $k\geqslant 2$ and an array $(\underline{a_{ij}})_{i,j<\omega}$ with $a_{ij}\in M^{|\underline y|}$ such that $\{\varphi(\underline x,\underline {a_{ij}})\}_{j<\omega}$ is
$k$-inconsistent for every $i<\omega$ and $\{\varphi(\underline x,\underline{a_{i,f(i)}})\}_{i<\omega}$ is consistent for any $f \colon\omega\rightarrow\omega$. Otherwise, we say that $\varphi(\underline x,\underline y)$ is NTP2.
We also say that a theory $T$ is NTP2 if every formula is NTP2 in $T$, and that a structure $M$ is NTP2 if $\Th(M)$ is NTP2. 
\end{definition}
We refer to \cite{CherNTP2} for details on NTP2.

A generic goal is to obtain an optimal NTP2 transfer theorem, that is, a classification of NTP2 henselian valued field down to their residue field. The aforementioned work of Chernikov does so in equicharacteristic 0, see \cite[Theorem 7.6]{CherNTP2}, but for other characteristics, we are far from a full classification. The strongest known results can be found in \cite{B-phd}. Let us focus on what happens in equicharacteristic $p$:

\begin{proposition}[{\cite[Propositions 2.5.17 and 2.5.19]{B-phd}}]
Let $(K,v)$ be a henselian valued field of positive characteristic.
If $(K,v)$ is NTP2, then it is semitame in the sense of \cite{KR-dr}, in particular, its value group is $p$-divisible and its residue field is perfect.
If $(K,v)$ is separably algebraic maximal Kaplansky
and its residue field is NTP2, then $(K,v)$ is NTP2.
\end{proposition}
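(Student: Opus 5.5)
We plan to prove the two assertions separately: the first by a counting argument on Artin--Schreier extensions, the second by transferring NTP2 along a relative embedding theorem, in the style of Chernikov's equicharacteristic~$0$ proof.

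\textbf{First assertion.} The key external input is the theorem of Chernikov, Kaplan and Simon that an NTP2 field of characteristic $p$ has only finitely many Artin--Schreier extensions. Equivalently, $K/\wp(K)$ is finite, where $\wp(X)=X^{p}-X$. So it suffices to show that $K/\wp(K)$ is infinite whenever $vK$ is not $p$-divisible or $Kv$ is imperfect. Both cases rest on one observation. If $v(y)<0$, then $v(\wp(y))=pv(y)\in p\,vK$, and the leading coefficient of $\wp(y)$ is a $p$-th power in the residue field. If instead $v(y)\ge0$, then $\wp(y)\in\mathcal{O}_{v}$.
\begin{itemize}
\item Suppose $\gamma\in vK\setminus p\,vK$ with $\gamma<0$. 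Choose $x_{n}$ with $v(x_{n})=n\gamma$ for integers $n$ prime to $p$. Any nonzero difference $x_{n}-x_{m}$ with $n\ne m$ has value $\min(n,m)\gamma\notin p\,vK$. By the observation, such a difference cannot lie in $\wp(K)$ modulo $\mathcal{O}_{v}$. A short descent, peeling off leading terms, should make this rigorous.
\item Suppose $Kv$ is imperfect. Fix a residue $\bar b\notin (Kv)^{p}$ with lift $b$, and $\pi\in\mathfrak{m}_{v}\setminus\{0\}$. Consider the elements $b\pi^{-pn}$. Their valuations are negative and lie in $p\,vK$, but their leading coefficients are not $p$-th powers. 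The same leading-term comparison separates their classes modulo $\wp(K)$.
\end{itemize}
The remaining conditions in the definition of semitame, namely that $K$ is dense in its completion relative to $K^{p}$, or the corresponding defect condition, should follow by a similar argument. Here one exhibits infinitely many Artin--Schreier (or purely inseparable-type) classes from a failure of density. This is the part I would check most carefully against the precise definition in \cite{KR-dr}.

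\textbf{Second assertion.} Let $(K,v)$ be separably algebraically maximal Kaplansky with $Kv$ NTP2. I would use Kuhlmann's relative embedding lemma for such fields. It is the analogue of Proposition~\ref{prp:mono} and Theorem~\ref{thm:subcompleteness}: over a common separably defectless base, the type of a tuple is determined by the residue field, the value group, and imperfection data. Using this lemma, we reduce to one-variable formulas in the main sort, via Chernikov's criterion that TP2 is witnessed by a formula $\varphi(x,\underline{y})$ with $|x|=1$.

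Next take an array that is mutually indiscernible over a small base, together with a single element $c$. The goal is to find a row such that, after removing finitely many elements, the remaining row is indiscernible over $c$. Following Chernikov, we analyse $\tp(c/\text{row})$ through the following data:
\begin{itemize}
\item pseudo-Cauchy sequences and the position of $c$ relative to balls centred at row elements;
\item residues and values of differences $c-a_{ij}$.
\end{itemize}
Here Kaplansky's hypothesis is what guarantees that every immediate extension is controlled by pseudo-limits of separable type. The residue field is NTP2 by assumption. The value group, being an ordered abelian group, is NIP and hence NTP2.

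\textbf{Main obstacle.} I expect the hardest step to be the inseparable part of the analysis. In the present setting, immediate separable extensions are excluded by separable algebraic maximality, but $c$ may realise an immediate purely inseparable or transcendental type. I would first handle the transcendental immediate case by the standard argument that such types are determined by a cut in a pseudo-Cauchy sequence, which is NIP-like. Then I would treat the inseparable case via $\lambda$-functions, as in Proposition~\ref{prp:mono}, reducing to the separable case over $K^{p}$.
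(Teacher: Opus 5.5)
The paper gives no proof of this proposition; it quotes it from \cite{B-phd}. So I assess your argument on its own terms.

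\textbf{First assertion.} Your route is right. By Chernikov--Kaplan--Simon the field $K$ has finitely many Artin--Schreier extensions, so it suffices to produce infinitely many cosets of $\wp(K)$ from any failure of semitameness. Your two bullets do this for $p$-divisibility of $vK$ and perfection of $Kv$. (Small slip: since $\gamma<0$, $v(x_n-x_m)=\max(n,m)\gamma$. No descent is needed, because an element of negative value outside $p\,vK$ is never in $\wp(K)$.)

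However, these are only consequences of semitameness. In characteristic $p$, semitameness in the sense of \cite{KR-dr} amounts to the completion $\hat K$ being perfect (their condition (DRvr), since $p\mathcal{O}_{\hat K}=0$), i.e.\ $K$ is dense in its perfect hull. This is in general strictly stronger, and it is exactly the part you leave as ``should follow''. It does follow by your method:
\begin{itemize}
\item If $a\in K$ and $a^{1/p}\notin\hat K$, there is $\gamma$ with $v(a^{1/p}-b)<\gamma$ for all $b\in K$.
\item Suppose $v(d)<-v(a)/p$ and $d^{p}a=\wp(y)$ with $y\in K$. Then $z=y/d\in K$ satisfies $(z-a^{1/p})^{p}=z^{p}-a=d^{1-p}z$.
\item Comparing values forces $v(z)=v(a)/p$. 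Hence $v(z-a^{1/p})=\bigl((1-p)v(d)+v(a)/p\bigr)/p$, which exceeds $\gamma$ once $v(d)$ is small enough. So $d^{p}a\notin\wp(K)$ whenever $v(d)$ is sufficiently negative.
\item Since $d_n^{p}a-d_m^{p}a=(d_n-d_m)^{p}a$, taking $v(d_n)=n\delta$ with $\delta<0$ and $n\geq N$ large gives infinitely many cosets.
\end{itemize}
Like your bullets, this needs $v$ nontrivial, which the statement tacitly assumes. An imperfect separably closed field with the trivial valuation is stable, hence NTP2, but its residue field is imperfect.

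\textbf{Second assertion.} This is not proved. You list ingredients, but the step carrying all the weight is only announced: that for a mutually indiscernible array and a singleton $c$, some row stays indiscernible over $c$.

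The criterion you aim for is also the wrong one. ``Indiscernible after deleting finitely many elements'' is the strong-dependence criterion, and it cannot detect TP2. If $c$ realises a path of a TP2-array for $\varphi$, each row has fewer than $k$ entries with $\models\varphi(c,a_{ij})$. Deleting them is compatible with the rest of the row being indiscernible over $c$, so no contradiction arises. Chernikov's criterion requires a whole row, up to conjugation over its first element, to be indiscernible over $c$.

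Next, Theorem~\ref{thm:subcompleteness} does apply to SAMK fields, since they are separably tame. But it determines types only over a separably defectless base, and you never exhibit one containing the row and $c$. When $c$ is a pseudo-limit of a pseudo-Cauchy sequence of transcendental type over the field generated by the row, there is no reason for the henselization of the field generated by the row and $c$ to be separably defectless. This is exactly where a genuine relative elimination for SAMK fields is needed, using the Kaplansky hypothesis and Kuhlmann's treatment of immediate function fields. Saying such types are ``determined by a cut, hence NIP-like'' is not an argument.

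Finally, you do not say how the residue and value data of the $c-a_{ij}$ are packaged ($\Lval$ has no angular component). Nor do you say how NTP2 of $Kv$ and NIP of $vK$ combine to bound that data. This combination is a substantial part of Chernikov's equicharacteristic-$0$ argument and would have to be redone here.
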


This led us to phrase the following:

\begin{conjecture}
Let $(K,v)$ be a henselian valued field of positive characteristic.
Then $(K,v)$ is NTP2 if and only if it is tame and its residue field is NTP2.
\end{conjecture}

The Hahn series field $\Hs{\FF_{q}}{\QQ}$ is the simplest case we could possibly study, as it is tame with divisible value group and finite residue field. Hence, a first step towards the conjecture is to determine whether $\Hs{\FF_{q}}{\QQ}$ is NTP2.
Despite all the efforts thrown at it, to this day, it still resits.

\begin{definition}
Let $p$ be a prime number.
The additive fragment $F_+$ is the $\Lring$-fragment generated by formulas of the form $\exists w\;P(\underline x,w)=y$, where $P\in\FF_p[\underline{x},w]$ is additive with respect to $w$.
It is a sub-fragment of the Galois fragment $\FEpos(\Lring)$.
\end{definition}

The work of Kaplan, Scanlon, and Wagner~\cite{KSW} established a clear link between field extensions generated by Artin-Schreier roots and dividing lines. Their work was then extended to NTP2 by Chernikov, Kaplan, and Simon. As noted in \cite[Section 2.5.8]{B-phd}, their methods can be extended to additive polynomials. More precisely:

\begin{lemma}\label{lem:NTP2}
Let $K$ be a field of characteristic $p>0$ such that, for each $n\in\NN_{>0}$, $K$ has finitely many Galois extensions of degree $p^n$.
Let $\psi(x;\underline y,z)$ be a formula of the form $\bigwedge_{i=1}^n(\exists w\;P_i(\underline y,w)=x-z)$, where $P_i(\underline y,w)\in\FF_p[\underline y,w]$ is additive with respect to $w$.
Then $\psi$ is NTP2.
\end{lemma}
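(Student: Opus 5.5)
We argue by contradiction, following Kaplan--Scanlon--Wagner and Chernikov--Kaplan--Simon. Suppose $\psi(x;\underline y,z)$ has TP2, witnessed by an array $(\underline{b}_{ij},c_{ij})_{i,j<\omega}$ in a sufficiently saturated $M\succeq K$, with rows $k$-inconsistent and paths consistent. The hypothesis ``finitely many Galois extensions of degree $p^{n}$ for each $n$'' is elementary in $K$: for fixed $n$ it is expressed by the finiteness of certain quotients of additive images. Hence it transfers to $M$.

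The key algebraic input is as follows. For each $i$ and $\underline b$, the map $w\mapsto P_i(\underline b,w)$ is an additive polynomial $\mathfrak{P}_{\underline b}$. Its image $G_{i,\underline b}=\mathfrak{P}_{\underline b}(M)$ is an additive subgroup of $M$, and $\psi(x;\underline b,c)$ defines the coset $c+\bigcap_{i} G_{i,\underline b}$. So $\psi$ defines cosets of a uniformly definable family of subgroups $H_{\underline b}=\bigcap_i G_{i,\underline b}$. I would then show the family $\{H_{\underline b}\}$ has uniformly finite index in pairwise intersections. The intended route follows \cite{KSW}: by the Ore/Artin--Schreier theory of additive polynomials, $M/\mathfrak{P}(M)$ is dual to a space of Galois extensions cut out by roots of $\mathfrak{P}$. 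After factoring $\mathfrak{P}$ over a finite extension into Artin--Schreier-type pieces, each quotient $M/\mathfrak{P}_{\underline b}(M)$ is finite. Its size is bounded in terms of $\deg_w P_i$ and the number of Galois extensions of degree $p^{n}$ with $p^n\le\deg$.

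Next, with every $H_{\underline b}$ of bounded finite index $N$ in $M$, consider a single row $i$. The cosets $c_{ij}+H_{\underline b_{ij}}$ are $k$-inconsistent. Since any $k$ cosets of subgroups of index $\le N$ with nonempty pairwise structure are constrained, after Ramsey/indiscernibility we can assume the rows are mutually indiscernible. Then the intersection $H^*=\bigcap_{i,j} H_{\underline b_{ij}}$ is a countable intersection of subgroups of index $\le N$. The standard argument, a chain condition for uniformly definable subgroups of bounded index (Baldwin--Saxl style, as in \cite{CherNTP2}), shows that finitely many of them already give the intersection up to finite index. A path $\{c_{i f(i)}+H_{\underline b_{if(i)}}\}_i$ being consistent for every $f$ with infinitely many rows each $k$-inconsistent forces $\omega$ many distinct cosets of bounded-index groups inside a single row. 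This contradicts finiteness of index: in row $i$, infinitely many cosets of subgroups each of index $\le N$ but pairwise-$k$-inconsistent is impossible once the subgroups stabilise by indiscernibility to a fixed $H_{\underline b_{i0}}$, which has only $N$ cosets.

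The main obstacle is the first step: rigorously bounding $[M:\mathfrak{P}_{\underline b}(M)]$ uniformly in $\underline b$, including degenerate parameters where $\mathfrak P_{\underline b}$ drops degree or becomes inseparable ($\mathfrak P=\mathfrak Q(w^{p^m})$). I would handle inseparability by passing to $M^{p^m}$, i.e. replacing $w$ by $w^{p^m}$ and using $[M:M^{p}]$ only through the separable part. If imperfection spoils finiteness, I would instead use the weaker CKS conclusion: definable additive subgroups with no infinite descending chains of uniformly bounded-index intersections suffice for NTP2. This is what \cite[Section 2.5.8]{B-phd} is cited for.
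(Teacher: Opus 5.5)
\textbf{Verdict: genuine gap.} Your skeleton is the paper's. The formula $\psi(x;\underline b,c)$ defines the coset $c+H_{\underline b}$ of $H_{\underline b}=\bigcap_{i}P_{i}(\underline b,M)$; one then bounds $[M:H_{\underline b}]$ uniformly, and bounded index yields NTP2. For that last step the paper simply quotes \cite[Porism 4.6]{B-asext}.

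\textbf{The final step is argued incorrectly.} A Baldwin--Saxl chain condition is an NIP phenomenon, and it fails here. Take $P(y,w)=w^{p}-y^{p-1}w$; then $H_{b}=b^{p}\wp_{p}(K)$ for $b\neq0$. In $K=\Hs{\FF_{q}}{\QQ}$ the subgroup $\wp_{p}(K)$ has index $p$, and these $H_{b}$ are the kernels of the functionals $x\mapsto\tau(b^{-p}x)$, where $\tau\colon K\to K/\wp_{p}(K)\cong\FF_{p}$. Infinitely many of these functionals are $\FF_{p}$-linearly independent. Consequently:
\begin{itemize}
\item countable intersections of the $H_{\underline b_{ij}}$ can have infinite index;
\item indiscernibility of a row does not make the groups $H_{\underline b_{ij}}$ equal to a fixed $H_{\underline b_{i0}}$;
\item the paths play no role in the argument.
\end{itemize}
The statement you end with is nevertheless true, by a counting argument inside a single row. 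Suppose $c_{j}+H_{j}$, $j<m$, are cosets of subgroups of index at most $N$ and no point lies in $k$ of them. Put $H=\bigcap_{j<m}H_{j}$, of finite index $D$. Each $c_{j}+H_{j}$ is a union of at least $D/N$ cosets of $H$, and each coset of $H$ lies in at most $k-1$ of the $c_{j}+H_{j}$. Hence $mD/N\leq(k-1)D$, i.e.\ $m\leq(k-1)N$. So no infinite $k$-inconsistent row of such instances exists; in fact no such instance even divides.

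\textbf{The bounded-index step is not established either.} The paper also treats it as immediate, but your factorisation mechanism does not follow from the stated hypothesis. That hypothesis only concerns the maximal pro-$p$ quotient of $G_{M}$. For separable $\mathfrak P$, additive Hilbert~90 gives $M/\mathfrak P(M)\cong H^{1}(G_{M},\ker\mathfrak P)$, and $G_{M}$ may act nontrivially on $\ker\mathfrak P$. Factoring $\mathfrak P$ over the splitting field $L$ of $\ker\mathfrak P$ brings in Artin--Schreier extensions of $L$, about which nothing is assumed.

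Moreover, a uniform bound over all $\underline b$ is simply false in degenerate cases. If some $P_{i}(\underline b,w)$ is the zero polynomial (e.g.\ $P=yw$ at $y=0$), then $H_{\underline b}=\{0\}$ has infinite index. These parameters form a definable set on which $\psi$ is just $x=z$, and they must be split off. Concretely: after discarding, via the counting bound, the finitely many nondegenerate entries in each row, a TP2 array would consist of instances $x=c_{ij}$. Consistency of paths through two rows then forces each row to be constant, a contradiction.

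In the case the paper actually needs ($K$ perfect with $G_{K}\cong\hat{\ZZ}$), the bound is easy:
\begin{itemize}
\item an inseparable $\mathfrak P=\mathfrak Q(w^{p^{m}})$ has the same image as $\mathfrak Q$;
\item $H^{1}(\hat{\ZZ},A)=A/(\sigma-1)A$ has at most $|A|\leq\deg_{w}P_{i}$ elements.
\end{itemize}
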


\begin{proof}
We follow the strategy appearing in the proof of \cite[Corollary 4.8]{B-asext}. For each $\overline a\in K^{|y|}$, let $H_{\overline a}$ be $\bigcap_{i=1}^n P_i(\overline a,K)$. Since each $P_i$ is additive, $H_{a}$ is an additive subgroup of $K$. Algebraically speaking, the formula $\psi(x;\overline a,z)$ is saying ``$x\in H_{\overline a}+z$''. By assumption on $K$, $[K:H_{\overline a}]$ is bounded, and thus by \cite[Porism 4.6]{B-asext} $\psi$ is NTP2.
\end{proof}
 
Since disjunctions of NTP2 formulas are NTP2, we obtain the following:

\begin{corollary}
Let $K$ be a field of characteristic $p>0$ such that for each $n$, $K$ has finitely many Galois extension of degree $p^n$.
Then any formula in the additive fragment of $K$ is NTP2.
In particular, this holds for $\Hs{\FF_{q}}{\QQ}$.
\end{corollary}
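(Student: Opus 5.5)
The plan is to combine Lemma~\ref{lem:NTP2} with the closure of NTP2 formulas under disjunction, after rewriting an arbitrary formula of the additive fragment into a suitable normal form. The additive fragment $F_+$ is generated by formulas $\exists w\;P(\underline x,w)=y$ with $P$ additive in $w$, closed under finite conjunctions, finite disjunctions and substitution of free variables. First, I would put every formula of $F_+$ into disjunctive normal form: a finite disjunction of finite conjunctions of (substituted) generators. Since a disjunction of NTP2 formulas is NTP2 (for any partition of the free variables), it suffices to show that each conjunction $\bigwedge_{i=1}^{n}\exists w\;P_i(\underline{t}_i,w)=s_i$ is NTP2 for every partition of its variables into object and parameter variables. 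Here the $\underline t_i$ and $s_i$ are variables, since substitution in a fragment only renames variables.

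The main reduction is to bring such a conjunction into the shape of Lemma~\ref{lem:NTP2}, namely $\bigwedge_i(\exists w\;P_i(\underline y,w)=x-z)$ with a single object variable $x$. I would handle the general partition in two steps. First, by the standard fact that NTP2 for all formulas follows from NTP2 for formulas with a single object variable (see \cite{CherNTP2}), it suffices to treat a single object variable $x$. Second, if $x$ does not occur among the $s_i$ in a given conjunct, that conjunct is either a pure parameter condition, which is harmless, or a condition on $\underline t_i$ involving $x$. In the latter case $\exists w\,P_i(\underline t_i,w)=s_i$ with $x$ among $\underline t_i$ is no longer of the ``coset of an additive subgroup'' form.

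This last situation is the main obstacle. I would try to restrict attention to the intended reading of the fragment, where the definable sets in question are cosets $H_{\underline a}+z$ of additive subgroups $H_{\underline a}=\bigcap_i P_i(\underline a,K)$. Concretely, I would take $s_i = x - z_i$, with the distinct shifts $z_i$ absorbed by translating, reducing to a single $z$ or else intersecting cosets of finite-index subgroups, which is again a coset of a finite-index subgroup or empty. The argument of Lemma~\ref{lem:NTP2} via \cite[Porism 4.6]{B-asext} then applies.

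The finiteness hypothesis is what drives the bound on indices. If $P(\underline a,w)$ is additive in $w$, then the index of $P(\underline a,K)$ in $K$ is controlled by Galois extensions of $p$-power degree, since its roots form an $\FF_p$-vector space and the cokernel is Galois-cohomologically governed by $p$-extensions. So finitely many Galois extensions of each degree $p^n$ bounds $[K:H_{\underline a}]$ uniformly in $\underline a$. Finally, for $\Hs{\FF_q}{\QQ}$ I would verify the hypothesis directly: $\Hs{\FF_q}{\QQ}$ is tame with divisible value group and finite residue field, so its absolute Galois group is that of $\FF_q$, namely $\hat{\ZZ}$. It therefore has exactly one Galois extension of each degree, in particular of each degree $p^n$.
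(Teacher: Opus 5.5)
Your skeleton is the paper's argument: disjunctive normal form, closure of NTP2 under finite disjunctions (for a fixed partition of the variables), Lemma~\ref{lem:NTP2} for each conjunction, and a check of the hypothesis for $\Hs{\FF_q}{\QQ}$. Your verification that $\Hs{\FF_q}{\QQ}$ has absolute Galois group $\hat{\ZZ}$ (it is defectless with divisible value group, so every finite extension is unramified) is more complete than the paper's remark about Artin--Schreier extensions.

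The trouble lies in the steps you add to handle arbitrary partitions of the variables.

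First, the criterion that NTP2 can be tested on formulas with one object variable is a statement about a whole theory: $T$ is NTP2 once \emph{all} one-variable formulas of $T$ are. It says nothing about a single formula of $F_+$ with several object variables unless you already know that every one-variable formula of the theory is NTP2. For $\Hs{\FF_q}{\QQ}$ that is exactly the open question.

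Second, when the object variable occurs among the coefficient variables $\underline t_i$, Lemma~\ref{lem:NTP2} does not apply, because it requires every coefficient variable to be a parameter. Take $\exists w\,(w^{p}-xw=y)$ with object variable $x$. In $\Hs{\FF_p}{\QQ}$ with $p>2$ and $y=1$, the defined set contains $0$, $2$ and $-1$ but not $1=2+(-1)$, so it is not a coset of an additive subgroup. Saying you will ``restrict attention to the intended reading'' does not treat this case; it changes the statement.

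Your paragraph re-deriving the index bound is also unnecessary, since that bound is what Lemma~\ref{lem:NTP2} packages. As written it overclaims: at parameters where every coefficient of $P(\underline a,w)$ vanishes, $H_{\underline a}=\{0\}$, so the index is not bounded uniformly in $\underline a$.

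Measured against the paper, though, you are not missing an idea. The paper's one-line proof silently adopts the same intended reading: the object variable sits only in the image slot and all coefficient variables are parameters. Under that reading, each conjunction, after dropping conjuncts in the parameters alone (which cannot create TP2), is an instance of the formula $\psi(x;\underline y,z)$ of Lemma~\ref{lem:NTP2} with $z=0$. There are no shifts to absorb, since substitution only renames variables. So under that reading your proof, with the one-variable reduction deleted, is complete and coincides with the paper's. Under the literal reading (every partition of the variables), neither your argument nor the paper's covers the coefficient-slot case. The honest fix is to state the restriction as part of the claim rather than present it as a step of the proof.
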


Recall that by Theorem~\ref{thm:FqQ}, any $\Lval$-formula
is equivalent
in the theory of $\Hs{\FF_p}{\QQ}$
to a formula of the form $\exists y\;P(\underline x,y)=0$ for arbitrary polynomials $P\in\FF_p[\underline x,y]$.
Meanwhile, formulas in the additive fragment are of a similar form with the extra requirement that the polynomials are additive. In order to prove NTP2, we would need to reduce from the arbitrary case to the additive case, something that we cannot do as of now.

It is well-known that $\Hs{\FF_{q}}{\QQ}$ does {\bf not} admit quantifier elimination, either in the language of valued fields, or in the language $\Lac$ of ac-valued fields. 
In particular:
the image of the Artin--Schreier function $\wp_{q}:\Hs{\FF_{q}}{\QQ}\rightarrow\Hs{\FF_{q}}{\QQ}$ is not quantifier-free $\Lac$-definable.

\begin{question}
Does $\Hs{\FF_{q}}{\QQ}$ admit elimination down to the additive $\Lring$-fragment?
\end{question}

A positive answer to this question would imply that $\Hs{\FF_{q}}{\QQ}$ is NTP2, by Lemma~\ref{lem:NTP2}.

\section*{Acknowledgements}

The first author
was supported by
the ANR-DFG project ``AKE-PACT'' (ANR-24-CE92-0082)
and
by ``Investissement d'Avenir'' launched by the French Government and implemented by ANR (ANR-18-IdEx-0001) as part of its program ``Emergence''.

%%% BIBLIOGRAPHY %%%
\def\bibfont{\footnotesize}
\bibliographystyle{plain}

%%%%%%%%%%%%%%%%%%%%%%%%%%%%%%%%%%%%%%%%%%%%%%%%%%%%%%%%%%%%%%%%%%%%%%%%%%%%%%%%%%%%%%%%%%%%%
%%%%%%%%%%%%%%%%%%%%%%%%%%%%%%%%%%%%%%%%%%%%%%%%%%%%%%%%%%%%%%%%%%%%%%%%%%%%%%%%%%%%%%%%%%%%%
%%%%%%%%%%%%%%%%%%%%%%%%%%%%%%%%%%%%%%%%%%%%%%%%%%%%%%%%%%%%%%%%%%%%%%%%%%%%%%%%%%%%%%%%%%%%%
%%%%%%%%%%%%%%%%%%%%%%%%%%%%%%%%%%%%%%%%%%%%%%%%%%%%%%%%%%%%%%%%%%%%%%%%%%%%%%%%%%%%%%%%%%%%%
%%%%%%%%%%%%%%%%%%%%%%%%%%%%%%%%%%%%%%%%%%%%%%%%%%%%%%%%%%%%%%%%%%%%%%%%%%%%%%%%%%%%%%%%%%%%%
\end{document}